\theoremstyle{plain}
\newtheorem{theorem}{Theorem}[section]	
\newtheorem{lemma}{Lemma}[section]
\theoremstyle{definition}
\newtheorem{definition}{Definition}[section]
\newtheorem{remark}{Remark}[section]
\DeclareMathOperator{\Card}{Card}
\DeclareMathOperator{\Var}{Var}
\newcommand{\R}{\mathbb{R}}
\newcommand{\mS}{\mathcal{S}}
\newcommand{\mF}{\mathcal{F}}
\newcommand{\mG}{\mathcal{G}}
\newcommand{\mL}{\mathcal{L}}
\renewcommand{\qed}{\hfill{\tiny \ensuremath{\blacksquare} }}%
\renewcommand{\tilde}{\widetilde}
\newcommand{\Ep}{{\mathrm{E}}}
\renewcommand{\Pr}{{\mathrm{P}}}
\newcommand{\bP}{\mathbb{P}}
\newcommand{\bE}{\mathbb{E}}
\newcommand{\bG}{\mathbb{G}}
\newcommand{\eps}{\varepsilon}
\begin{document}
\begin{frontmatter}

\title{Empirical and multiplier bootstraps for suprema of empirical processes of increasing complexity, and related Gaussian couplings}

\author[label1]{Victor Chernozhukov}
\ead{vchern@mit.edu}
\author[label2]{Denis Chetverikov}
\ead{chetverikov@econ.ucla.edu}
\author[label3]{Kengo Kato}
\ead{kkato@e.u-tokyo.ac.jp}

\address[label1]{Department of Economics and Center for Statistics, MIT, 50 Memorial Drive, Cambridge, MA 02142, USA.}
\address[label2]{Department of Economics, UCLA, Bunche Hall, 8283, 315 Portola Plaza, Los Angeles, CA 90095, USA.}
\address[label3]{Graduate School of Economics, University of Tokyo, 7-3-1 Hongo Bunkyo-ku, Tokyo 113-0033, Japan.}

\begin{abstract}
We derive strong approximations to the supremum of the non-centered empirical process indexed by a possibly unbounded VC-type class of functions by the suprema of the Gaussian and bootstrap processes. 
The bounds of these approximations are non-asymptotic, which allows us to work with classes of functions whose complexity increases with the sample size.  The construction of couplings is not of the Hungarian type and is instead based on the Slepian-Stein methods and Gaussian comparison inequalities.  
The increasing complexity of classes of functions and non-centrality of the processes make the results useful for applications in modern nonparametric statistics (Gin\'{e} and Nickl \cite{GN15}), in particular allowing us to study the power properties of nonparametric tests using Gaussian and bootstrap approximations. 
\end{abstract}

\begin{keyword}
coupling \sep empirical process \sep multiplier bootstrap process \sep empirical bootstrap process \sep Gaussian approximation \sep supremum

\MSC[2010] 60F17 \sep 62E17 \sep 62G20



\end{keyword}

\end{frontmatter}

\section{Introduction}
Let $(S,\mathcal{S})$ be a measurable space, and let $X,X_1,\dots,X_n$ be a sequence of i.i.d. random variables taking values in $(S,\mathcal{S})$ with a common distribution $P$. We assume that $S$ is a separable metric space and $\mathcal{S}$ is its Borel $\sigma$-field. Let $\mF$ be a class of measurable functions $f:S\to \R$ with a measurable envelope $F:S\to\R$ satisfying $F(x)\geq \sup_{f\in\mF}|f(x)|$ for all $x\in S$.  Define the empirical process indexed by $\mF$:
\[
\mathbb{G}_n f=\frac{1}{\sqrt{n}}\sum_{i=1}^n (f(X_i)-Pf), \ f \in \mF, 
\]
where $Pf=\int f d P=\bE[f(X)]$. Let $e_1,\dots,e_n$ be independent standard Gaussian random variables independent of $X_{1}^{n} := \{ X_1,\dots,X_n\}$. Define the multiplier bootstrap process indexed by $\mF$:
\begin{equation}\label{eq: MB process}
\mathbb{G}_n^e f = \frac{1}{\sqrt{n}}\sum_{i=1}^n e_i (f(X_i)-P_n f), \ f \in \mF,
\end{equation}
where $P_n$ is the empirical measure with respect to $X_{1},\dots,X_{n}$; that is, $P_n f =n^{-1}\sum_{i=1}^n f(X_i)$ for $f \in \mF$. Let $N_1,\dots, N_n$ be a sequence of random variables multinomially distributed with parameters $n$ and (probabilities) $1/n,\dots,1/n$ that are independent of $X_{1}^{n}$. Define the empirical bootstrap process indexed by $\mF$:
\[
\mathbb{G}_n^* f = \frac{1}{\sqrt{n}}\sum_{i=1}^n (N_i-1)f(X_i), \ f \in \mF.
\]

Suppose that $\mF \subset \mathcal{L}^{2}(P)$ is a VC type class of functions (the definition of VC type classes is recalled in Section \ref{sec: couplings}) with $\sup_{f \in \mF} |Pf| < \infty$. 
Then $\mF$ is totally bounded with respect to the semimetric 
\[
e_{P}(f,g) = \sqrt{P(f-g)^{2}}, \ f,g \in \mF,
\]
and there exists a centered Gaussian process $G_P$ indexed by $\mF$ with uniformly $e_{P}$-continuous sample paths and covariance function 
\begin{equation}\label{eq: covariance function}
\bE[G_P(f) G_P(g)]=\text{Cov}(f(X),g(X)), \ f,g \in \mF.
\end{equation}

In this paper, for a given  functional $B: \mF \to \R$, we are interested in constructing couplings for
\begin{align}
Z=\sup_{f\in\mF} (B(f)+\bG_n f) \quad &\text{and} \quad \tilde{Z} \stackrel{d}{=} \sup_{f\in\mF} (B(f)+G_P f), \label{eq: emp process} \\
Z^e=\sup_{f\in\mF} (B(f)+\bG_n^e f) \quad &\text{and} \quad \tilde{Z}^e \stackrel{d \mid X_1^n}{=} \sup_{f\in\mF} (B(f)+G_P f), \label{eq: mult bootstrap process} \\
Z^*=\sup_{f\in\mF} (B(f)+\bG_n^* f) \quad  &\text{and} \quad \tilde{Z}^* \stackrel{d \mid X_1^n}{=} \sup_{f \in \mF} (B(f)+G_P f), \label{eq: emp bootstrap process}
\end{align}
such that the random variables appearing in each line are close to each other with high probability. The notation $\stackrel{d}{=}$ means equality in distribution, and $\stackrel{d \mid X_1^n}{=}$ means equality in conditional distribution given $X_{1}^{n}=\{ X_{1},\dots,X_{n} \}$. Here we suppose that the probability space is such that 
\[
(\Omega, \mathcal{A}, \bP) = (S^{n},\mS^{n},P^{n}) \times (T,\mathcal{T},Q) \times ([0,1],\mathcal{B}([0,1]),\lambda)
\]
where $X_1,\dots,X_n$ are the coordinate projections of $(S^{n},\mS^{n},P^{n})$, random variables $e_1,\dots,e_n$ (or $N_1,\dots,N_n$) depend on the ``second'' coordinate only, and $([0,1],\mathcal{B}([0,1]),\lambda)$ is the Lebesgue probability space on $[0,1]$, that is, $\mathcal{B}([0,1])$ is the Borel $\sigma$-field on $[0,1]$ and $\lambda$ is the Lebesgue measure on $[0,1]$. The last augmentation of the probability space enables us to generate a uniform random variable on $[0,1]$ independent of $X_{1},\dots,X_{n}$ and $e_{1},\dots,e_{n}$ (or $N_{1},\dots,N_{n}$).
We also implicitly assume here that the functional $B$ and the class $\mF$ are ``nice'' enough so that measurability problems do not arise; see Section \ref{sec: couplings} for explicit assumptions.

Our coupling constructions are based on the Slepian-Stein methods and Gaussian comparison inequalities and built on the ideas in \cite{C05a, C05b, CCK0, CCK1, CCK4, CCK2, CCK3}. We emphasize that the construction of couplings in this paper is non-asymptotic, and so the class of functions $\mF=\mF_n$ may depend on $n$, and its complexity may grow as the sample size increases. This feature of the couplings is especially important in modern nonparametric statistics \cite{GN15}; see \cite{CCK1} and \cite{CCK4} for examples of applications.

We also emphasize that our couplings are not of the Hungarian type, and so are different from those obtained in e.g. \cite{K94} and \cite{R94}, among many others. In particular, in contrast to e.g. \cite{R94}, our couplings do not depend on the maximal total variation in $\mF$. Instead, the couplings only depend on VC properties of the class of functions $\mF$ as well as on certain moments of the functions in $\mF$ and the envelope $F$. 
This feature of the construction leads to a different range of possible applications in comparison with Hungarian couplings; see the detailed discussion in \cite{CCK1}.

Gaussian and bootstrap approximations of the supremum of a {\em non-centered} empirical process have many potential applications. 
For example, these approximations can be used to derive non-asymptotic bounds on the errors in multivariate CLT. Specifically, let $S=\R^p$, and let $A$ be a closed convex set in $S$. For $\mathcal{V}^{p-1}=\{v\in \R^p: \|v\|=1\}$, let $V_A:\mathcal{V}^{p-1}\to \R$ be the support function of $A$ defined by $V_A(v)=\sup_{x\in A}v^{T}x$. Then $x\in A$ if and only if $\sup_{v\in \mathcal{V}^{p-1}}(v^{T}x - V_A(v))\leq 0$. Therefore, our results can be used to approximate
\begin{equation}
\label{eq: prob example}
\bP\Big(\frac{1}{\sqrt{n}}\sum_{i=1}^n X_i\in A\Big)=\bP\Big(\sup_{v\in \mathcal{V}^{p-1}}\Big(\frac{1}{\sqrt{n}}\sum_{i=1}^n v^{T}X_i - V_A(v)\Big)\leq 0\Big).
\end{equation}
Here, the dimension $p=p_n$ of the sample space $S=\R^p$ can depend on the sample size $n$ and increase as $n$ grows. Importantly, if the set $A$ is such that the set $\mathcal{V}^{p-1}$ on the right-hand side of (\ref{eq: prob example}) can be reduced to a sufficiently small subset of $\mathcal{V}^{p-1}$, the Gaussian approximation becomes possible even if $p$ is larger or much larger than $n$; see \cite{CCK0} and \cite{CCK3} for examples. More broadly, one can use our results for distributional approximation of general convex functionals on $\R^p$ where the probability measure on $\R^p$ is given by the distribution of a normalized sum of i.i.d. random vectors; see Section 11 of  \cite{DLS98} where it is demonstrated that such functionals can be represented as suprema of non-centered empirical processes. 

Another possible application is to study power properties of nonparametric tests where under the null, the statistic can be approximated by $\sup_{f\in\mF} \bG_n f$, and under the alternative, the statistic can be approximated by $\sup_{f\in  \mF}(B(f)+\bG_n f)$, the functional $B$ representing deviations from the null hypothesis.
Finally, non-centered empirical processes are useful fore multi-scale testing where one combines many statistics corresponding to different scales into one test using scale-dependent critical value for each statistic; see \cite{DS01} where such tests were used for qualitative hypotheses testing.

This paper builds upon but differs from our previous papers \cite{CCK0,CCK1,CCK4,CCK2,CCK3}. In particular, this paper establishes, in the infinite dimensional setting, formal results on the multiplier and empirical bootstraps when the envelope $F$ may be unbounded. In addition, this paper allows to approximate the supremum of a possibly non-centered empirical process. These settings are not covered in our previous papers \cite{CCK0,CCK1,CCK4,CCK2,CCK3} and are new. 

The organization of this paper is as follows. In the next section, we present our main coupling theorems. In Section \ref{sec: auxiliary couplings}, we derive two auxiliary theorems that deal with maxima of high-dimensional random vectors. All the proofs are deferred to Sections \ref{sec: proofs} and \ref{sec: proofs auxiliary couplings}. For convenience of the reader, we cite some additional results that are useful in our derivations in Section \ref{sec: technical tools}.

\subsection{Notation} 
We use standard notation from the empirical process literature. For any probability measure $Q$ on a measurable space $(S,\mS)$, we use the notation $Qf = \int f dQ$.
For $p\geq 1$, we use $\mL^{p}( Q )$ to denote the space of all measurable functions $f: S \to \R$ such that $\| f \|_{Q,p} = (Q | f |^{p})^{1/p} < \infty$.
We define the (semi)metric $e_{Q}$ on $\mL^{2}( Q )$ by
$e_{Q}(f,g) = \| f - g \|_{Q,2}, \ f,g \in \mL^{2}( Q )$.

For $\varepsilon > 0$, an $\varepsilon$-net of a (semi)metric space $(T,d)$ is a subset $T_{\varepsilon}$ of $T$ such that for every $t \in T$ there exists a point $t_{\varepsilon} \in T_{\varepsilon}$ with $d (t,t_{\varepsilon}) < \varepsilon$.  The $\varepsilon$-covering number $N(T,d,\varepsilon)$ of $T$ is the infimum of the cardinality of $\varepsilon$-nets of $T$, that is,  $N(T,d,\varepsilon) = \inf \{ \Card (T_{\varepsilon}) : \ \text{$T_{\varepsilon}$ is an $\varepsilon$-net of $T$} \}$. For a subset $A$ of a semimetric space $(T,d)$, we use $A^{\delta}$ to denote the $\delta$-enlargement of $A$, that is,  $A^{\delta} = \{ x \in T : d(x,A) \leq \delta \}$ where $d(x,A) = \inf_{y \in A}d(x,y)$. We also use the notation $\| \cdot \|_{T} = \sup_{t \in T} \| \cdot \|$. 

For a function $g:\R \to \R$, we write $\|g\|_\infty=\sup_{x\in\R}|g(x)|$, and assuming that $g$ is differentiable, we use $g'$ to denote the derivative of $g$. We denote by $C^k(\R)$ the space of $k$-times continuously differentiable functions on $\R$. 
For $a,b \in \R$, we use the notation $a \vee b = \max \{ a,b \}$.

\section{Main results}
\label{sec: couplings}

In this section, we construct couplings between random variables in (\ref{eq: emp process}), (\ref{eq: mult bootstrap process}), and (\ref{eq: emp bootstrap process}) when $\mF$ is a VC type class of functions. Recall the definition: 
\begin{definition}[VC type class]
Let $\mF$ be a class of measurable functions on a measurable space $(S,\mS)$, to which a measurable envelope $F$ is attached.
We say that $\mF$ is VC type with envelope $F$ if there are constants $A,v > 0$ such that
$\sup_{Q} N(\mF,e_{Q},\varepsilon \| F \|_{Q,2}) \leq (A/\varepsilon)^{v}$ for all $0 < \varepsilon \leq 1$, where the supremum is taken over all finitely discrete probability measures on $(S,\mS)$.
\end{definition}

Let $B: \mF \to \R$ be a given functional, and for $\eta>0$, let $N_{B}(\eta)$ be the minimal integer $N$ such that there exist $f_{1},\dots,f_{N} \in \mF$ with the property that for every $f\in \mF$, there exists $1 \leq j \leq N$ with $|B(f)-B(f_j)|<\eta$. We make the following assumptions.
\begin{enumerate}
\item[(A)]
There exists a  countable subset $\mG$ of $\mF$ such that for any $f \in \mF$, there exists a sequence $g_{m} \in \mG$ with $g_{m} \to f$ pointwise and $B(g_m)\to B(f)$.
\item[(B)] 
The class of functions $\mF$ is VC type with a  measurable envelope $F$ and constants $A\geq e$ and $v\geq 1$. 
\item[(C)]
There exist constants $b \geq \sigma>0$ and $q\in[4,\infty)$ such that $\sup_{f\in\mF}P|f|^k\leq \sigma^2 b^{k-2}$ for $k=2,3,4$, and $\|F\|_{P,q}\leq b$.
\end{enumerate}

Assumptions (B) and (C) guarantee that $\mF$ is totally bounded with respect to the semimetric $e_{P}$, and there exists a centered Gaussian process $G_{P}$ indexed by $\mF$ with uniformly $e_{P}$-continuous sample paths and covariance function given in (\ref{eq: covariance function}). 

Pick any $\eta > 0$ and put
\[
K_n = K_{n} (v,A,b,\sigma,B,\eta) = \log N_{B}(\eta) + v(\log n\vee\log (A b/\sigma)).
\]
The following theorem provides a coupling for  $Z$ and $\tilde{Z}$. 

\begin{theorem}[Coupling for the supremum of the empirical process]\label{thm: inf dim GAR}
Suppose that assumptions (A)--(C) are satisfied, and in addition suppose that $K_{n}^{3} \leq n$. Let $Z=\sup_{f\in\mF}(B(f)+\mathbb{G}_n f)$. Then for every $\gamma\in(0,1)$, there exists a random variable $\tilde{Z}\stackrel{d}{=} \sup_{f \in \mF} (B(f)+G_{P}f)$ such that
\[
\bP \{ |Z-\tilde{Z}|>C_1 (\eta + \delta^{(1)}_{n}) \} \leq C_{2} (\gamma + n^{-1}) \\
\]
where $C_1,C_2$ are positive constants that depend only on $q$, and 
\begin{equation}\label{eq: delta1}
\delta_{n}^{(1)} = \delta_{n}^{(1)} (v,A,b,\sigma,q,B,\eta,\gamma) = \frac{b K_n}{\gamma^{1/q} n^{1/2-1/q}}+\frac{(b \sigma^2 K_n^2)^{1/3}}{\gamma^{1/3}n^{1/6}}.
\end{equation}
\end{theorem}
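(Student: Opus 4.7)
The plan is to reduce the supremum problem to a finite-dimensional max-coupling problem, for which the auxiliary theorems of Section~\ref{sec: auxiliary couplings} apply. The overall error decomposes into a discretization error (absorbed into $\eta$) and a finite-dimensional coupling error (giving $\delta_n^{(1)}$).

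First, I would build a finite net $\mathcal{F}_0 \subset \mathcal{F}$ that is simultaneously fine in the $B$-metric and in the $e_P$-metric. Using assumption (A) to reduce to a countable class, the definition of $N_B(\eta)$ gives a $B$-net of cardinality $N_B(\eta)$, and the VC property (B) gives an $e_P$-net of cardinality at most $(A/\varepsilon)^{v}$ at scale $\varepsilon\|F\|_{P,2}$. Refining one by the other produces $\mathcal{F}_0$ with $\log|\mathcal{F}_0| \lesssim K_n$ (choosing $\varepsilon$ polynomially small in $n$) and a projection $\pi:\mathcal{F}\to\mathcal{F}_0$ satisfying $|B(f) - B(\pi f)| < \eta$ and $e_P(f,\pi f) < \varepsilon\|F\|_{P,2}$ for every $f \in \mathcal{F}$. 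This yields
\[
\bigl|Z - \max_{f \in \mathcal{F}_0}(B(f) + \mathbb{G}_n f)\bigr| \leq \eta + \sup_{e_P(f,g) < \varepsilon\|F\|_{P,2}} |\mathbb{G}_n (f-g)|,
\]
and similarly for $\tilde Z$ with $G_P$ in place of $\mathbb{G}_n$. Both modulus-of-continuity terms are controlled with high probability via Talagrand's inequality and chaining for VC type classes; for $\varepsilon$ suitably small these contributions will be dominated by $\delta_n^{(1)}$.

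Next, I would apply an auxiliary Gaussian coupling (from Section~\ref{sec: auxiliary couplings}) to the random vectors $(B(f)+\mathbb{G}_n f)_{f \in \mathcal{F}_0}$ and $(B(f)+G_P f)_{f \in \mathcal{F}_0}$, where the $B(f)$'s enter as deterministic shifts and therefore do not affect the variance structure. Under the $q$-th moment condition (C), a truncation of the envelope at a level tuned to $\gamma$ and $b$ produces the first summand $bK_n/(\gamma^{1/q} n^{1/2-1/q})$ of $\delta_n^{(1)}$, while the Slepian–Stein / Gaussian-comparison argument applied to the bounded remainder, with $\log|\mathcal{F}_0| \lesssim K_n$ and variance proxy $\sigma^2$, produces the second summand $(b\sigma^2 K_n^2)^{1/3}/(\gamma^{1/3} n^{1/6})$. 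The extra Lebesgue factor in the probability space supplies an independent uniform variable allowing us to upgrade the distributional result to a single $\tilde Z$ coupled to $Z$ on the same probability space via a Strassen–Skorokhod / quantile-transfer argument applied at the max-vector level, which then transfers back to the supremum through the discretization bound.

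The main obstacle is the interlocking choice of parameters: the $e_P$-net scale $\varepsilon$, the truncation threshold, and the slack $\gamma$ must all be selected so that the modulus of continuity, the truncation tail, and the finite-dimensional coupling rate each fit inside the claimed $\delta_n^{(1)}$ with constants depending only on $q$. A secondary difficulty is handling the modulus of continuity of $G_P$ compatibly with that of $\mathbb{G}_n$, so that a single net $\mathcal{F}_0$ controls both simultaneously; this is where the VC entropy bound is used twice, and it also underlies the lift from the finite-dimensional coupling back to the supremum.
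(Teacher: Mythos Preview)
Your plan is essentially the paper's own proof: discretize via a joint $(B,e_P)$-net of log-cardinality $\lesssim K_n$, control the oscillation of $\mathbb{G}_n$ and $G_P$ over the mesh, apply Theorem~\ref{thm: fin dim main} to the finite maxima, and invoke Strassen to produce the coupling. Two small cautions: classical Talagrand's inequality needs a bounded envelope, whereas here $F$ is only in $\mathcal L^q(P)$, so for the $\|\mathbb G_n\|_{\mF_\varepsilon}$ bound you should use the $q$-moment deviation inequality (Lemma~\ref{concentration}) as the paper does---this is exactly where the $\gamma^{1/q}n^{1/2-1/q}$ scaling in $\delta_n^{(1)}$ originates; and Strassen is applied directly to the real-valued pair $(Z,\tilde Z)$, not at the vector level---Theorem~\ref{thm: fin dim main} already delivers the set-inclusion inequality $\bP(Z^\varepsilon\in A)\le\bP(\tilde Z^\varepsilon\in A^{C\delta})+\cdots$ needed for Lemma~\ref{thm: strassen}.
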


The result in Theorem \ref{thm: inf dim GAR} is new because it allows for non-centered processes. In addition, even in the case of centered processes, i.e. when $B\equiv 0$, the bound here improves slightly on our previous result given in Corollary 2.2 of \cite{CCK1}.

\begin{remark}[Comparison with Beck's \cite{Beck85} lower bounds]
Suppose that $\mF$ is the class of indicators of closed balls in $\R^{d}$, and $X_{1},X_{2},\dots$ are i.i.d. uniform random variables on $[0,1]^{d}$. Then \cite{M89} proved, via KMT constructions, that there exist versions $B_{n}$ of $G_{P}$ such that 
\begin{equation}
\| \bG_{n} - B_{n} \|_{\mF} := \sup_{f \in \mF} | \bG_{n}f - B_{n}f| = O\{  n^{-1/(2d)}(\log n)^{3/2} \} \quad a.s., \label{eq: KMT rate}
\end{equation}
and up to a possible power of $\log n$, this rate is best possible when $d \geq 2$ (Beck \cite{Beck85}, Theorem 2). 

We shall apply Theorem \ref{thm: inf dim GAR} to this class of functions. In this example, $B \equiv 0$, and so $N_{B}(\eta) \equiv  1$. Since the class of closed balls in $\R^{d}$ is a VC class with index $d+2$ \citep[see][]{Du79}, assumption (B) is satisfied with $F \equiv 1, v = cd$ with some universal constant $c$, and $A$ being some universal constant. 
In addition,  assumption (C) is satisfied with $\sigma=b=1$ and arbitrary $q\in[4,\infty)$, so there is a universal constant $c'$ such that 
\[
\delta_{n}^{(1)} \leq c' \{ \gamma^{-1/q} d n^{-1/2+1/q} \log n +\gamma^{-1/3} d^{2/3}  n^{-1/6} (\log n)^{2/3} \}.
\]
If we take $\gamma=\gamma_{n} \to 0$ sufficiently slowly, say $\gamma_{n} = (\log n)^{-1/2}$, then for $Z_{n} = \sup_{f \in \mF} \bG_{n}f$, Theorem \ref{thm: inf dim GAR} implies that there exists a sequence $\tilde{Z}_{n}$ of random variables with $\tilde{Z}_{n} \stackrel{d}{=} \sup_{f \in \mF} G_{P}f$ such that 
\begin{equation}
\label{eq: coupiling balls}
|Z_{n} - \tilde{Z}_{n}| = o_{\mathbb{P}}\{ d n^{-1/2+1/q} (\log n)^{1+1/(2q)} + d^{2/3} n^{-1/6} (\log n)^{5/6}\}.
\end{equation}
This holds even when $d=d_{n} \to \infty$ as long as $d \log n= o(n^{1/3})$ (which guarantees $K_n^{3} \leq n$), and the right-hand side on (\ref{eq: coupiling balls}) is $o_{\bP}(1)$ if $d (\log n)^{5/4} = O(n^{1/4})$ by setting $q$ large enough. It is then clear that, although Theorem \ref{thm: inf dim GAR} is only applicable to the supremum, and there is a difference in the mode of convergence, the rate of approximation of our coupling in (\ref{eq: coupiling balls}) is better than that implied by (\ref{eq: KMT rate}) when $d$ is large. \qed
\end{remark} 

Next we provide a coupling for $Z^{e}$ and $\tilde{Z}^{e}$. 
\begin{theorem}[Coupling for the supremum of the multiplier bootstrap process]
\label{thm: inf dim MB}
Suppose that assumptions (A)--(C) are satisfied, and in addition suppose that $K_n \leq n$. Let $Z^e=\sup_{f\in\mF}(B(f)+\mathbb{G}_n^e f)$. Then for every $\gamma\in(0,1)$, there exists a random variable $\tilde{Z}^e \stackrel{d \mid X_{1}^{n}}{=} \sup_{f\in\mF}(B(f)+G_P f)$ such that
\[
\bP \{ |Z^e-\tilde{Z}^e|>C_3 (\eta + \delta^{(2)}_{n}) \} \leq C_4 (\gamma + n^{-1}), 
\]
where $C_3,C_4$ are positive constants that depend only on $q$, and 
\[
\delta_{n}^{(2)} = \delta_{n}^{(2)}(v,A,b,\sigma,q,B,\eta,\gamma) = \frac{b K_n}{\gamma^{1 + 1/q}n^{1/2-1/q}} + \frac{(b\sigma K_n^{3/2})^{1/2}}{\gamma^{1 + 1/q}n^{1/4}}. 
\]
\end{theorem}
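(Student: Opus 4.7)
Conditional on $X_{1}^{n}$, the multiplier bootstrap process $\bG_{n}^{e}$ is a centered Gaussian process indexed by $\mF$ with covariance kernel
\[
\widehat{\Sigma}(f,g) = \frac{1}{n}\sum_{i=1}^{n}(f(X_i)-P_{n}f)(g(X_i)-P_{n}g),
\]
whereas $G_{P}$ has covariance $\Sigma(f,g) = Pfg-(Pf)(Pg)$. Unlike Theorem \ref{thm: inf dim GAR}, no Yurinskii-type coupling of non-Gaussian sums is needed: only a conditional Gaussian-to-Gaussian comparison followed by a Strassen randomization using the auxiliary $U\sim\mathrm{Unif}[0,1]$ supplied by the Lebesgue factor of the probability space. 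I proceed in three steps.

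\textbf{Step 1 (finite discretization).} Using (A) I pass to a countable version of $\mF$, and combine the $\eta$-net $\{f_{1},\dots,f_{N_{B}(\eta)}\}$ of $B$ with an $e_{P}$-net of $\mF$ at a scale $\delta_{*}$ to be chosen; by (B) the resulting grid $\mF_{*}$ satisfies $\log\Card(\mF_{*}) \lesssim K_{n}+v\log(\sigma/(b\delta_{*}))$. The replacement errors $\sup_{\mF}(B(f)+\bG_{n}^{e}f)-\max_{\mF_{*}}(B(f)+\bG_{n}^{e}f)$ and the analogous quantity for $G_{P}$ are controlled by a local Gaussian maximal inequality, using that on a high-probability event the empirical metric $e_{P_{n}}$ is comparable to $e_{P}$ over $\mF$ (a consequence of a Talagrand bound on the VC-type class $\{f^{2}:f\in\mF\}$ under (C), after truncating the envelope $F$ at level $b\gamma^{-1/q}n^{1/q}$; the corresponding tail event has probability $\leq\gamma$ by Markov on $\|F\|_{P,q}\leq b$). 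Balancing $\delta_{*}$ produces the first term of $\delta_{n}^{(2)}$.

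\textbf{Step 2 (conditional Gaussian comparison).} On $\mF_{*}$, conditional on $X_{1}^{n}$, the random variables $Z_{*}^{e} := \max_{\mF_{*}}(B(f)+\bG_{n}^{e}f)$ and $\tilde Z_{*}^{e} := \max_{\mF_{*}}(B(f)+G_{P}f)$ are maxima of Gaussian vectors with identical means and with covariance matrices $\widehat{\Sigma}|_{\mF_{*}\times\mF_{*}}$ and $\Sigma|_{\mF_{*}\times\mF_{*}}$ respectively. A Slepian-Stein Gaussian comparison inequality (as developed in the earlier CCK papers) gives
\[
\sup_{x\in\R}\bigl|\bP(Z_{*}^{e}\leq x\mid X_{1}^{n})-\bP(\tilde Z_{*}^{e}\leq x)\bigr| \lesssim \sqrt{K_{n}\,\|\widehat{\Sigma}-\Sigma\|_{\mF_{*}\times\mF_{*}}}.
\]
The product class $\{fg:f,g\in\mF\}$ is VC-type with envelope $F^{2}$, and under (C), $\Var(f(X)g(X))\leq\sigma^{2}b^{2}$ by Cauchy-Schwarz on the fourth-moment bound; a Talagrand/Bernstein estimate combined with the same truncation of $F$ yields $\|\widehat{\Sigma}-\Sigma\|_{\mF\times\mF}\lesssim b\sigma\sqrt{K_{n}/n}\,\gamma^{-2/q}$ on an event of probability $\geq 1-\gamma-n^{-1}$. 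Substituting gives a conditional Kolmogorov distance of order $(b\sigma)^{1/2}K_{n}^{3/4}n^{-1/4}\gamma^{-1/q}$, producing the second term of $\delta_{n}^{(2)}$.

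\textbf{Step 3 (Strassen coupling and main obstacle).} I convert the conditional Kolmogorov bound $\Delta_{\mathrm{Kol}}$ from Step 2 into an actual coupling via the conditional quantile transform based on $U$, which yields $\tilde Z^{e}$ with the correct conditional law and
\[
\bP(|Z^{e}-\tilde Z^{e}|>\delta\mid X_{1}^{n}) \leq \Delta_{\mathrm{Kol}} + \sup_{x}\bP(|\tilde Z^{e}-x|\leq\delta\mid X_{1}^{n}),
\]
where the anti-concentration term is handled by the Nazorov/CCK inequality for Gaussian suprema. Taking expectations and restricting to the event of Steps 1 and 2 delivers the claimed probability bound with $\delta \asymp \eta+\delta_{n}^{(2)}$. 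The principal technical difficulty is the covariance concentration in Step 2: obtaining the rate $b\sigma\sqrt{K_{n}/n}$ on $\|\widehat{\Sigma}-\Sigma\|_{\mF\times\mF}$ under only an $L^{q}$ bound on $F$ requires a delicate balance between the truncation level, the Bernstein variance $\sigma^{2}b^{2}$, and the target confidence $\gamma$; it is the square-root loss when this rate is fed into the Gaussian comparison that dictates the characteristic $K_{n}^{3/4}n^{-1/4}$ exponent (rather than $K_{n}^{1/2}n^{-1/2}$) in $\delta_{n}^{(2)}$, and which, together with the $\gamma^{-1}$ factor accumulated from the conditional Markov steps, accounts for the $\gamma^{-1-1/q}$ prefactors.
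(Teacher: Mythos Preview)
Your high-level structure—discretize, bound the empirical-vs-population covariance discrepancy $\widehat\Sigma-\Sigma$, apply a Gaussian comparison, then couple—matches the paper. The key divergence, and the place where your argument breaks, is how you route the Gaussian comparison into a coupling.

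\textbf{The gap.} In Step~2 you assert a direct conditional Kolmogorov-distance bound $\sup_{x}|\bP(Z_*^e\leq x\mid X_1^n)-\bP(\tilde Z_*^e\leq x)|\lesssim\sqrt{K_n\Delta}$, and in Step~3 you combine it with Nazarov anti-concentration to produce the coupling. But the Gaussian comparison (Theorem~\ref{thm: mb finite dimensional} in the paper, which is what the Slepian--Stein argument actually yields here) does \emph{not} give a Kolmogorov bound of this form; it gives $\bP(Z\in A)\leq\bP(\tilde Z\in A^{\delta})+C\delta^{-1}\sqrt{\Delta\log p}$ for all Borel $A$ and all $\delta>0$. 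Turning this into a Kolmogorov bound (or, equivalently, carrying out your Step~3 as written) forces you to invoke anti-concentration of $\tilde Z$, and Nazarov's inequality (Lemma~\ref{lem: AC for nonzero mean}) requires $\underline\sigma^2=\inf_{f\in\mF}\Var(f(X))>0$. The theorem imposes no such lower bound, so your argument does not prove the statement as written; even if one grants the assumption, the resulting bound acquires a stray $1/\underline\sigma$ factor that cannot be absorbed into the claimed $\delta_n^{(2)}$.

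\textbf{How the paper avoids this.} The paper never passes through Kolmogorov distance or anti-concentration. The $A$-versus-$A^\delta$ inequality from Theorem~\ref{thm: mb finite dimensional} is \emph{exactly} the hypothesis of the conditional Strassen theorem of Monrad--Philipp (Lemma~\ref{lem: conditional strassen}): one exhibits an event $E\in\sigma(X_1^n)$ of probability $\geq 1-\gamma-n^{-1}$ on which $\bP(Z^{e,\varepsilon}\in A\mid X_1^n)\leq\bP(\tilde Z^{\varepsilon}\in A^{\delta})+C\delta^{-1}\sqrt{\Delta K_n}$ for all Borel $A$, adds the discretization remainders $\|\bG_n^e\|_{\mF_\varepsilon}$ and $\|G_P\|_{\mF_\varepsilon}$ (bounded via Borell--Sudakov--Tsirel'son and Dudley on $E$), and then invokes Lemma~\ref{lem: conditional strassen} directly to manufacture $\tilde Z^e$. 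Choosing $\delta=\delta_n^{(2)}$ balances $\delta$ against $C\delta^{-1}\sqrt{\Delta K_n}$, and the extra $\gamma^{-1}$ in the prefactors arises because the Strassen bound must hold with conditional error $\lesssim\gamma$ (not merely $O(1)$) on $E$.

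Two smaller points: (i) the paper does not truncate $F$; it controls $\|\bG_n\|_{\mF}$ and $\|\bG_n\|_{\mF\cdot\mF}$ via the Fuk--Nagaev-type deviation inequality of Lemma~\ref{concentration}, which already accommodates $F\in\mL^q(P)$; (ii) the covariance bound $\Delta$ has two pieces, $b\sigma K_n^{1/2}/(\gamma^{2/q}n^{1/2})$ and $b^2K_n/(\gamma^{2/q}n^{1-2/q})$, and the second one (not just the discretization step) is what produces the $bK_n/n^{1/2-1/q}$ term in $\delta_n^{(2)}$ after passing through $\sqrt{\Delta K_n}$.
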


Finally, we provide a coupling for $Z^*$ and $\tilde{Z}^*$.
\begin{theorem}[Coupling for the supremum of the empirical bootstrap process]
\label{thm: inf dim EB}
Suppose that assumptions (A)--(C) are satisfied, and in addition suppose that $K_n^3 \leq n$. Let $Z^*=\sup_{f\in\mF}(B(f)+\mathbb{G}_n^* f)$. Then for every $\gamma\in(0,1)$, there exists a random variable $\tilde{Z}^* \stackrel{d \mid X_{1}^{n}}{=} \sup_{f\in\mF}(B(f)+G_P f)$ such that
\[
\bP \{ |Z^*-\tilde{Z}^*|>C_5 (\eta + \delta^{(3)}_{n}) \} \leq C_6 (\gamma + n^{-1}), 
\]
where $C_5,C_6$ are positive constants that depend only on $q$, and 
\begin{align*}
\delta_{n}^{(3)} &= \delta_{n}^{(3)}(v,A,b,\sigma,q,B,\eta,\gamma) \\
& = \frac{b K_n}{\gamma^{1 + 1/q}n^{1/2-1/q}} + \frac{(b\sigma^2 K_n^{2})^{1/3}}{\gamma^{1/3} n^{1/6}} + \frac{(b \sigma K_n^{3/2})^{1/2}}{\gamma^{1 + 1/q}n^{1/4}}. 
\end{align*}
\end{theorem}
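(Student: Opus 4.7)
The plan is to follow the same blueprint as Theorems \ref{thm: inf dim GAR} and \ref{thm: inf dim MB}: discretize $\mF$ by a finite net of log-cardinality $O(K_n)$, transfer to a finite-dimensional problem, invoke an auxiliary high-dimensional coupling from Section \ref{sec: auxiliary couplings} tailored to the empirical bootstrap, and finally undo the discretization using maximal inequalities for the bootstrap process. The three summands of $\delta_n^{(3)}$ should appear in parallel: the VC-residual term from controlling the net oscillation, the $n^{-1/6}$ term from the Slepian--Stein Gaussian approximation of the multinomial sum, and the $n^{-1/4}$ term from bridging the conditional bootstrap covariance $\Cov_{P_n}(f,g)$ to the target covariance $\Cov_{P}(f,g)$.

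For the net, assumption (A) lets me restrict to a countable subclass, and I then form $\mF_n \subset \mF$ as the union of an $\eta$-net for the functional $B$ (of cardinality $N_B(\eta)$) with an $\sigma \varepsilon$-net of $\mF$ in the $e_P$-semimetric (of cardinality at most $(A/\varepsilon)^{v}$ by assumption (B)). Choosing $\varepsilon$ to be a small negative power of $n$ yields $\log \Card \mF_n \lesssim K_n$. Replacing $Z^{*}$ and $\tilde{Z}^{*}$ by their maxima over $\mF_n$ introduces an error of at most $\eta$ (from $B$) plus two oscillation terms, one for $\bG_n^{*}$ and one for $G_P$. On a high-probability event, these oscillations are controlled by Talagrand-type maximal inequalities for the bootstrap process, together with standard VC concentration of $\|F\|_{P_n,q}$ and of $\sup_{f,g \in \mF}|(P_n-P)(fg)|$; this produces the first summand $bK_n / (\gamma^{1+1/q} n^{1/2-1/q})$ of $\delta_n^{(3)}$.

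Next I apply the auxiliary empirical-bootstrap coupling from Section \ref{sec: auxiliary couplings} to the $\mF_n$-indexed random vector $(\bG_n^{*} f)_{f \in \mF_n}$. This delivers a Gaussian vector whose conditional covariance given $X_1^n$ equals $(\Cov_{P_n}(f,g))_{f,g \in \mF_n}$, so that (after adjoining $B$) its maximum has the desired conditional law of $\sup_{f \in \mF_n}(B(f) + G_P f)$ up to the covariance mismatch $\Cov_{P_n}-\Cov_{P}$. The Slepian--Stein smoothing portion of the auxiliary result yields the $(b \sigma^{2} K_n^{2})^{1/3}/(\gamma^{1/3} n^{1/6})$ piece (inherited from the empirical-process coupling), while the covariance-mismatch correction, handled via a high-dimensional Gaussian comparison inequality combined with the $O_{\bP}(b \sigma \sqrt{K_n/n})$ rate of $\sup_{f,g \in \mF_n}|(P_n-P)(fg)|$, yields the $(b \sigma K_n^{3/2})^{1/2}/(\gamma^{1+1/q} n^{1/4})$ piece. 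The augmentation of the probability space by an independent uniform on $[0,1]$ then lets me thread the two couplings together and produce a single $\tilde{Z}^{*}$.

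The main obstacle, absent in Theorem \ref{thm: inf dim MB}, is the dependence among the multinomial counts $(N_1,\dots,N_n)$: they are negatively correlated and constrained by $\sum_i N_i = n$, so a Slepian--Stein interpolation cannot be run coordinatewise as with independent Gaussian multipliers. I would handle this either by Poissonization --- coupling $(N_i)$ to independent $\mathrm{Poisson}(1)$ weights $(M_i)$, absorbing the drift $n^{-1/2}\sum_i (M_i-1) P_n f$, and running Slepian--Stein on the now-independent $(M_i)$ --- or by carrying the linear constraint through a multinomial-adapted Stein coupling. A subsidiary difficulty is that the auxiliary coupling must be applied conditionally on $X_1^n$, so its moment hypotheses on $(f(X_i))_{f \in \mF_n}$ must hold on a high-probability event; this is delivered by VC-type concentration of the empirical moments $P_n |f|^{k}$ around $P|f|^{k}$, and its $L^{q}$-type tail control is what inflates the exponent on $\gamma$ from $1/3$ (as in $\delta_n^{(1)}$) to $1+1/q$ in the bootstrap-specific piece of $\delta_n^{(3)}$.
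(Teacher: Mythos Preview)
Your overall architecture is right—discretize, couple the discretized $\bG_n^{*}$ conditionally to a Gaussian with covariance $\Cov_{P_n}$ (i.e.\ to the multiplier bootstrap), then bridge $\Cov_{P_n}$ to $\Cov_{P}$ via Gaussian comparison—and you have correctly attributed the three summands of $\delta_n^{(3)}$. But you misidentify the central difficulty and thereby miss the paper's key simplification.

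There is no separate ``empirical-bootstrap coupling'' in Section~\ref{sec: auxiliary couplings}; only Theorems~\ref{thm: fin dim main} and~\ref{thm: mb finite dimensional} are available. The paper's observation is that, conditional on $X_1^n$, the process $\bG_n^{*}$ \emph{is} an ordinary empirical process based on $n$ i.i.d.\ draws $X_1^{*},\dots,X_n^{*}$ from the discrete measure $P_n$: writing $\bG_n^{*} f = n^{-1/2}\sum_{i=1}^n\bigl(f(X_i^{*}) - P_n f\bigr)$, the ``observations'' $X_i^{*}$ are independent, so Theorem~\ref{thm: fin dim main} applies directly (conditionally on $X_1^n$) with the target Gaussian being precisely the conditional law of $\bG_n^{e}$. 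The multinomial dependence among the $N_i$ disappears once you change coordinates from $(N_i)$ to $(X_i^{*})$, and no Poissonization or multinomial Stein coupling is needed. The proof then concludes by chaining with the inequality already established inside the proof of Theorem~\ref{thm: inf dim MB} to pass from $Z^{e}$ to $\tilde Z$, and applying the conditional Strassen lemma once.

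Your Poissonization route, by contrast, is not obviously harmless as sketched: the drift $n^{-1/2}\bigl(\sum_i M_i - n\bigr)P_n f$ has conditional standard deviation $|P_n f|$, which under assumption (C) is only bounded by $\sigma + n^{-1/2}\|\bG_n\|_{\mF}$ and is therefore of order~$\sigma$, not $o(1)$; ``absorbing'' it would require an additional argument you have not supplied. The remaining ingredients you list—Talagrand for $\|\bG_n^{*}\|_{\mF_\varepsilon}$ on the event where the empirical moments are controlled, and the high-probability bounds on $P_n|f|^{k}$ and $\|\bG_n\|_{\mF\cdot\mF}$—are exactly what the paper uses.
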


\begin{remark}
\label{rem: conditional coupling}
By Markov's inequality, the following inequality is directly deduced from Theorem \ref{thm: inf dim MB}: under the conditions of Theorem \ref{thm: inf dim MB}, for every $\alpha \in (0,1)$, with probability at least $1-\alpha$, we have
\[
\bP \{ |Z^{e} - \tilde{Z}^{e}| > C_3 (\eta + \delta^{(2)}_{n}) \mid X_{1}^{n} \} \leq \alpha^{-1} C_{4} (\gamma+n^{-1}). 
\]
Likewise,  the following inequality is directly deduced from Theorem \ref{thm: inf dim EB}: under the conditions of Theorem \ref{thm: inf dim EB}, for every $\alpha \in (0,1)$, with probability at least $1-\alpha$, we have
\[
\bP \{ |Z^{*} - \tilde{Z}^{*}| > C_5 (\eta + \delta^{(3)}_{n}) \mid X_{1}^{n} \} \leq \alpha^{-1} C_{6} (\gamma+n^{-1}). 
\]
\qed
\end{remark}

\begin{remark}
\label{rem: bounds in Kolmogorov distance}
In applications to statistics, it is often more useful to have  bounds on the Kolmogorov distance for the following pairs of distribution functions: $\bP(Z \leq \cdot)$ and $\bP(\tilde{Z} \leq \cdot)$; $\bP(Z^{e} \leq \cdot \mid X_{1}^{n})$ and $\bP(\tilde{Z} \leq \cdot)$; and $\bP(Z^{*} \leq \cdot \mid X_{1}^{n}) $ and $\bP(\tilde{Z} \leq \cdot)$. 
Once such bounds are obtained, we will have a bound on, say, the Kolmogorov distance between $\bP(Z \leq \cdot)$ and $\bP(Z^{e} \leq \cdot \mid X_{1}^{n})$. By the following simple lemma, we see that to obtain such bounds from the coupling inequalities stated in Theorems \ref{thm: inf dim GAR}--\ref{thm: inf dim EB}, we need an {\em anti-concentration} inequality for $\tilde{Z}$, that is, an inequality bounding $\sup_{t \in \R} \bP(|\tilde{Z} - t| \leq \eps)$ for $\eps > 0$. 
\begin{lemma}
Let $V,W$ be real-valued random variables such that $\bP(|V-W| > r_{1}) \leq r_{2}$ for some constants $r_{1},r_{2} > 0$. Then we have 
\[
\sup_{t \in \R} |\bP(V \leq t) - \bP(W \leq t)| \leq \sup_{t \in \R}\bP(| W-t| \leq r_{1}) + r_{2}.
\]
\end{lemma}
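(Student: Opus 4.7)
The plan is to use the event $\{|V-W|\leq r_1\}$ (which has probability at least $1-r_2$ by hypothesis) to transfer information between the distributions of $V$ and $W$, and then absorb the resulting shift into an anti-concentration term for $W$. This is the standard ``coupling + anti-concentration'' technique already invoked implicitly elsewhere in the paper. I would aim to bound $\bP(V\leq t)-\bP(W\leq t)$ and $\bP(W\leq t)-\bP(V\leq t)$ separately, showing each is at most $\sup_{s}\bP(|W-s|\leq r_1)+r_2$, and then take suprema over $t$.

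For the upper tail on $V$, I would decompose
\[
\bP(V\leq t)=\bP(V\leq t,\,|V-W|\leq r_1)+\bP(V\leq t,\,|V-W|>r_1)\leq \bP(W\leq t+r_1)+r_2,
\]
since on $\{|V-W|\leq r_1\}$, the event $\{V\leq t\}$ is contained in $\{W\leq t+r_1\}$. Writing $\bP(W\leq t+r_1)=\bP(W\leq t)+\bP(t<W\leq t+r_1)$ and noting $\{t<W\leq t+r_1\}\subseteq\{|W-t|\leq r_1\}$ gives $\bP(V\leq t)-\bP(W\leq t)\leq \sup_{s}\bP(|W-s|\leq r_1)+r_2$. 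For the reverse direction, I would play the symmetric game starting from $\bP(W\leq t-r_1)\leq \bP(V\leq t)+r_2$, obtained again by splitting along $\{|V-W|\leq r_1\}$, and then use $\bP(W\leq t)-\bP(W\leq t-r_1)=\bP(t-r_1<W\leq t)\leq \bP(|W-t|\leq r_1)$ to conclude $\bP(W\leq t)-\bP(V\leq t)\leq \sup_{s}\bP(|W-s|\leq r_1)+r_2$.

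There is essentially no real obstacle: the only minor subtlety is making sure the one-sided intervals $(t,t+r_1]$ and $[t-r_1,t)$ (which arise from the $r_1$-shifts) are each contained in a two-sided $r_1$-ball around some point, so that they can be dominated by the symmetric anti-concentration functional $\bP(|W-s|\leq r_1)$ appearing in the statement. Taking $s=t$ works in both cases. Combining the two one-sided estimates and taking $\sup_{t\in\R}$ then yields the desired Kolmogorov-distance bound.
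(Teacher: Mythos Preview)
Your argument is correct and is exactly the standard one; the paper itself writes ``The proof of this lemma is immediate and hence omitted,'' so there is nothing to compare against beyond confirming that your two one-sided bounds via the decomposition along $\{|V-W|\leq r_1\}$ are precisely what is intended. The only trivial slip is that the second interval should read $(t-r_1,t]$ rather than $[t-r_1,t)$, but this does not affect the inclusion $\{t-r_1<W\leq t\}\subset\{|W-t|\leq r_1\}$ or the conclusion.
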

The proof of this lemma is immediate and hence omitted. In the case where $B(\cdot) \equiv 0$, a useful anti-concentration inequality for $\tilde{Z}$ is found in Lemma A.1 in \cite{CCK1}, which essentially follows from Theorem 3 in \cite{CCK2}. Lemma A.1 in \cite{CCK1} does not cover, however, non-centered Gaussian processes. Therefore, here we provide a new anti-concentration inequality that can be applied to non-centered Gaussian processes. The proof of the lemma can be found in Section \ref{sec: proofs}.
\begin{lemma}
\label{lem: AC for nonzero mean}
Let $T$ be a non-empty set, and let $\ell^{\infty}(T)$ be the set of all bounded functions on $T$ endowed with the sup-norm. Let $X(t), t \in T$ be a possibly non-centered tight Gaussian random element in $\ell^{\infty}(T)$ such that $\underline{\sigma}^{2} := \inf_{t \in T} \Var (X(t)) > 0$. 
Define $d(s,t) := \sqrt{\Ep[ (X(t)-X(s))^{2}]}, \ s,t \in T$, and for $\delta > 0$, define $\phi (\delta) :=  \bE [ \sup_{(s,t) \in T_{\delta}} | X(t) - X(s)|]$, where $T_{\delta} = \{ (s,t) : d(s,t) \leq \delta \}$. 
Then for every $\eps > 0$,
\begin{align*}
&\sup_{x \in \R} \bP (| \sup_{t \in T} X(t) -x| \leq \eps) \\
&\quad \leq \inf_{\delta,r > 0} \left \{  2(1/\underline{\sigma})(\eps + \phi(\delta) + r\delta) (\sqrt{2 \log N(T,d,\delta)} + 2) + e^{-r^{2}/2} \right \}.
\end{align*}
\end{lemma}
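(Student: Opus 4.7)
My plan is to reduce the claim to the finite-dimensional case by discretizing the process on a $\delta$-net, handling the discretization error via the Borell--Tsirelson--Ibragimov--Sudakov (TIS) inequality, and finally invoking a Nazarov-type anti-concentration inequality for the maximum of a (possibly non-centered) Gaussian vector of dimension $N(T,d,\delta)$.

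First, fix $\delta, r > 0$ and let $T^{\delta} \subset T$ be a minimal $\delta$-net of $(T,d)$, so that $|T^{\delta}| = N(T,d,\delta)$. Writing $\Delta := \sup_{(s,t) \in T_{\delta}} |X(t) - X(s)|$, a nearest-point argument (for each $t \in T$, pick $\pi(t) \in T^{\delta}$ with $d(t,\pi(t)) \leq \delta$) gives $|\sup_{t \in T} X(t) - \max_{s \in T^{\delta}} X(s)| \leq \Delta$. Since the centered Gaussian process $\{X(t) - X(s) : (s,t) \in T_{\delta}\}$ has variances bounded by $\delta^{2}$, the Borell--TIS inequality yields $\bP(\Delta \geq \phi(\delta) + r\delta) \leq e^{-r^{2}/2}$. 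Combining these two facts, for every $x \in \R$,
$$
\bP(|\sup_{t \in T} X(t) - x| \leq \eps) \leq \bP(|\max_{s \in T^{\delta}} X(s) - x| \leq \eps + \phi(\delta) + r\delta) + e^{-r^{2}/2}.
$$

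The next step is to apply a Nazarov-type anti-concentration inequality to the (possibly non-centered) Gaussian vector $(X(s))_{s \in T^{\delta}}$, whose marginal variances are each at least $\underline{\sigma}^{2}$: for every $\tau > 0$,
$$
\sup_{x \in \R} \bP(|\max_{s \in T^{\delta}} X(s) - x| \leq \tau) \leq \frac{2 \tau}{\underline{\sigma}}\bigl(\sqrt{2 \log N(T,d,\delta)} + 2\bigr).
$$
Plugging in $\tau = \eps + \phi(\delta) + r\delta$ and taking the infimum over $\delta,r > 0$ yields the statement.

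The main obstacle is this last step: the Chernozhukov--Chetverikov--Kato anti-concentration inequality (Theorem 3 of \cite{CCK2}) is formulated for \emph{centered} Gaussian vectors, whereas here we must work with a possibly non-centered vector, and a simple translation does not reduce one to the other because the means $\bE X(s)$ vary with $s$. The extension should nevertheless be valid, since Nazarov's density-bound argument for $\max_{s} X(s)$ depends only on the marginal standard deviations of the $X(s)$ and not on their means. Concretely, one writes $f_{M}(x) = \sum_{s} f_{X(s)}(x)\bP(\max_{s' \neq s} X(s') \leq x \mid X(s) = x)$ and bounds the conditional factor by a Gaussian comparison argument that is translation invariant in the mean; this yields the displayed bound uniformly in the mean vector, and the routine verification would be spelled out separately.
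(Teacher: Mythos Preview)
Your proposal is correct and follows essentially the same route as the paper: discretize on a $\delta$-net, control the discretization error $\sup_{(s,t)\in T_\delta}|X(t)-X(s)|$ about its mean $\phi(\delta)$ via the Borell--Sudakov--Tsirel'son inequality (with variance bound $\delta^2$), and then apply a finite-dimensional Nazarov-type anti-concentration bound to the non-centered Gaussian vector $(X(s))_{s\in T^\delta}$. The paper carries out the last step exactly as you anticipate, proving the non-centered finite-dimensional bound as a separate lemma by writing $X\stackrel{d}{=}\Sigma^{1/2}W+\mu$ with $W\sim N(0,I)$ and applying Nazarov's half-space surface-area inequality directly to $\{w: a_j^T w \le (t-\mu_j)/\sigma_j,\ 1\le j\le p\}$; this is cleaner than the density-decomposition sketch you outline and makes the mean-independence of the bound transparent.
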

\qed
\end{remark}

\section{Auxiliary results for discretized processes}
\label{sec: auxiliary couplings}

This section states two auxiliary results for ``discretized'' processes that will be used to prove the theorems stated in Section \ref{sec: couplings}. 

\begin{theorem}
\label{thm: fin dim main}
Let $X_1,\dots,X_n$ be independent random vectors in $\R^p$ ($p \geq 2$) with finite absolute third moments, that is, $\bE[|X_{i j}|^3]<\infty$ for all $1\leq i\leq n$ and $1\leq j\leq p$. Define $\mu_i=\bE[X_i]$ and $\tilde{X}_i=X_i-\mu_i$, $1\leq i\leq n$, and consider the statistic $Z=\max_{1\leq j\leq p}n^{-1/2}\sum_{i=1}^n X_{i j}$. Let $Y_1,\dots,Y_n$ be independent random vectors in $\R^p$ with $Y_i\sim N(\mu_i,\bE[\tilde{X}_i \tilde{X}_i^T])$, and define $\tilde{Y}_i=Y_i-\mu_i$, $1\leq i\leq n$ and $\tilde{Z} = \max_{1 \leq j \leq p} n^{-1/2} \sum_{i=1}^{n} Y_{ij}$. Then for every $\delta>0$ and every Borel subset $A$ of $\R$, we have
\[
\bP (Z \in A) \leq \bP(\tilde{Z} \in A^{C_{7}\delta}) + \frac{C_8\log^2 p}{\delta^3 \sqrt{n}} \cdot \left \{ L_n+M_{n,X}(\delta)+M_{n,Y}(\delta) \right \},
\]
where $C_{7}, C_{8}$ are universal positive constants, and
\begin{align*}
&L_n=\max_{1\leq j\leq p}\frac{1}{n}\sum_{i=1}^n\bE\left[|\tilde{X}_{i j}|^3\right],\\
&M_{n,X}(\delta)=\frac{1}{n}\sum_{i=1}^n \bE\left[\max_{1\leq j\leq p}|\tilde{X}_{i j}|^3\cdot 1\left\{\max_{1\leq j\leq p}|\tilde{X}_{i j}|>\delta\sqrt{n}/\log p\right\}\right], \\
&M_{n,Y}(\delta)=\frac{1}{n}\sum_{i=1}^n \bE\left[\max_{1\leq j\leq p}|\tilde{Y}_{i j}|^3\cdot 1\left\{\max_{1\leq j\leq p}|\tilde{Y}_{i j}|>\delta\sqrt{n}/\log p\right\}\right].
\end{align*}
\end{theorem}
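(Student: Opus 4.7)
The plan is to combine three standard Slepian--Stein ingredients: (i) a softmax regularization of the maximum, (ii) a mollifier smoothing of the set $A$, and (iii) a Gaussian interpolation comparing the laws of $\bar X := n^{-1/2}\sum_{i=1}^{n} X_i$ and $\bar Y := n^{-1/2}\sum_{i=1}^{n} Y_i$ via a Stein-type calculation. The threshold $u := \delta\sqrt{n}/\log p$ appearing in $M_{n,X}(\delta)$ and $M_{n,Y}(\delta)$ will enter as a truncation level that separates ``bulk'' third-moment contributions (controlled by $L_n$) from ``tail'' ones.

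First I would replace $\max$ by the softmax $F_\beta(x) := \beta^{-1}\log\sum_{j} e^{\beta x_j}$ with $\beta$ of order $\log(p)/\delta$, so that $0\le F_\beta(x)-\max_j x_j\le (\log p)/\beta = O(\delta)$. Next I would sandwich $\mathbf{1}_A$ between two $C^3$ functions $m_-\le\mathbf{1}_A\le m_+$ with $\|m_\pm^{(k)}\|_\infty\le C\delta^{-k}$ for $k=1,2,3$ and $m_+\le \mathbf{1}_{A^{c\delta}}$ (and analogously for $m_-$), obtained by a standard mollifier construction. Sandwiching reduces the target to bounding $|\bE[m(F_\beta(\bar X))]-\bE[m(F_\beta(\bar Y))]|$ uniformly over $m\in\{m_-,m_+\}$, absorbing the two smoothing errors into the enlargement $A^{C_7\delta}$.

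To control that difference I would use the Gaussian interpolation
\[
W(t)\;=\;\mu+n^{-1/2}\sum_{i=1}^{n}\bigl(\sqrt{1-t}\,\tilde X_i+\sqrt{t}\,\tilde Y_i\bigr),\qquad \mu:=n^{-1/2}\sum_{i=1}^{n}\mu_i,
\]
with the $\tilde Y_i$ drawn independently of the $\tilde X_i$. Writing $h:=m\circ F_\beta$ and $\Psi(t):=\bE[h(W(t))]$, I would compute $\Psi(1)-\Psi(0)=\int_0^1\Psi'(t)\,dt$ by applying Gaussian integration by parts to the $\tilde Y$-contributions and a third-order Taylor expansion to the $\tilde X$-contributions. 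Since $\tilde X_i$ and $\tilde Y_i$ share means and covariances, the first- and second-order terms cancel, leaving a purely third-order remainder; together with the classical softmax bounds $\sum_{j,k}|\partial_{jk}F_\beta|\le 2\beta$ and $\sum_{j,k,l}|\partial_{jkl}F_\beta|\le C\beta^2$ and the chain rule, this produces a bound proportional to $(C\log^2 p/\delta^3)\,n^{-3/2}\sum_{i=1}^n\bE[\|\tilde X_i\|_\infty^3+\|\tilde Y_i\|_\infty^3]$.

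The main obstacle is that $\bE[\|\tilde X_i\|_\infty^3]$ can be much larger than the $L_n$ in the statement (in the worst case by a factor of $p$), so a direct estimate is too loose. The remedy is truncation at level $u=\delta\sqrt n/\log p$: the excess part $\|\tilde X_i\|_\infty^3\mathbf{1}\{\|\tilde X_i\|_\infty>u\}$ summed over $i$ contributes exactly $n\,M_{n,X}(\delta)$, while on the bulk $\{\|\tilde X_i\|_\infty\le u\}$ I would redo the Stein step using the additional pointwise information $\|n^{-1/2}\tilde X_i\|_\infty\le \delta/\log p$. Absorbing one factor of $u/\sqrt n$ from the third-order remainder into the $\log^2 p/\delta^3$ prefactor reduces it to a quadratic form whose expectation, via Cauchy--Schwarz and the Lyapunov inequality $(\bE[\tilde X_{ij}^2])^{1/2}\le(\bE[|\tilde X_{ij}|^3])^{1/3}$, is bounded by a universal constant times $L_n$. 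The $\tilde Y$-side is handled identically and yields $M_{n,Y}(\delta)$. Assembling the smoothing errors and the three contributions $L_n$, $M_{n,X}(\delta)$, $M_{n,Y}(\delta)$ then yields the stated inequality with universal constants $C_7,C_8$.
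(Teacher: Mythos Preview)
Your overall architecture matches the paper's proof exactly: softmax smoothing with $\beta=\delta^{-1}\log p$, a mollifier sandwich $1_A\le g\le 1_{A^{3\delta}}$ with $\|g^{(k)}\|_\infty\lesssim\delta^{-k}$, a Lindeberg/Stein interpolation on $h=g\circ F_\beta$, and truncation of $\tilde X_i,\tilde Y_i$ at level $u=\delta\sqrt n/\log p=\beta^{-1}\sqrt n$ to split off $M_{n,X}(\delta),M_{n,Y}(\delta)$. The paper proceeds identically (citing \cite{CCK3}, Lemma~5.1, for the interpolation step).

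The one place your plan is shaky is the bulk estimate. Your proposed mechanism---``absorb one factor of $u/\sqrt n$, reduce to a quadratic form, then Cauchy--Schwarz plus Lyapunov''---does not, as written, produce $L_n=\max_j n^{-1}\sum_i\bE[|\tilde X_{ij}|^3]$. Pulling out $\|\tilde X_i\|_\infty\le u$ once leaves you with an expectation of $\|\tilde X_i\|_\infty^2$ (or of $\sum_j\pi_j(W^*)\tilde X_{ij}^2$ with $W^*$ still depending on $\tilde X_i$), and no combination of Cauchy--Schwarz and Lyapunov turns that into a coordinate-wise maximum of third moments without a hidden polynomial-in-$p$ loss. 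The actual device, which the paper lists explicitly, is a \emph{stability} property of the third-derivative envelope: there exist nonnegative $U_{jkl}\ge|\partial_{jkl}h|$ with $\sum_{jkl}U_{jkl}\lesssim\beta^2\delta^{-1}$ and
\[
U_{jkl}(x)\;\lesssim\;U_{jkl}(x+y)\;\lesssim\;U_{jkl}(x)\qquad\text{whenever }\|y\|_\infty\le\beta^{-1}.
\]
On the bulk event $\{\|n^{-1/2}\tilde X_i\|_\infty\le\beta^{-1}\}$ the intermediate Taylor point $W^*$ differs from the leave-one-out point $W_{-i}$ by at most $\beta^{-1}$ in sup-norm, so $U_{jkl}(W^*)\lesssim U_{jkl}(W_{-i})$, which is \emph{independent} of $\tilde X_i$. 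After this decoupling one uses H\"older, $\bE[|\tilde X_{ij}\tilde X_{ik}\tilde X_{il}|]\le\max_m\bE[|\tilde X_{im}|^3]$, together with the fact that the softmax weights inside $U_{jkl}$ sum to a constant, to obtain $\sum_{jkl}\bE[U_{jkl}(W_{-i})]\,\bE[|\tilde X_{ij}\tilde X_{ik}\tilde X_{il}|]\lesssim\beta^2\delta^{-1}\max_j\bE[|\tilde X_{ij}|^3]$, and summing over $i$ gives $L_n$. Your phrase ``redo the Stein step using $\|n^{-1/2}\tilde X_i\|_\infty\le\delta/\log p$'' is gesturing at precisely this stability-and-decouple step; the next sentence just misdescribes how it cashes out.
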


\begin{theorem}
\label{thm: mb finite dimensional}
Let $X=(X_1,\dots,X_p)^{T}$ and $Y=(Y_1,\dots,Y_p)^{T}$ be random vectors in $\R^p$ ($p \geq 2$) with  $X\sim N(\mu,\Sigma^{X})$ and $Y\sim N(\mu,\Sigma^{Y})$. 
Let $\Delta = \max_{1\leq j,k\leq p}|\Sigma_{jk}^{X} - \Sigma^{Y}_{jk}|$, where $\Sigma_{jk}^{X}$ and $\Sigma_{jk}^{X}$ denote the $(j,k)$-th elements of $\Sigma^{X}$ and $\Sigma^{Y}$, respectively. 
Define $Z=\max_{1\leq j\leq p}X_j$ and $\tilde{Z} =  \max_{1\leq j\leq p} Y_j$. Then for every $\delta>0$ and every Borel subset $A$ of $\R$, 
\[
\bP (Z \in A) \leq \bP(\tilde{Z} \in A^{\delta}) + C_9\delta^{-1}\sqrt{\Delta \log p},
\]
where $C_{9} > 0$ is a universal constant. 
\end{theorem}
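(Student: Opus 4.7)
The plan is to apply a Slepian--Stein Gaussian interpolation argument to a doubly-smoothed version of $x \mapsto \mathbf{1}_A(\max_j x_j)$. First I would introduce the log-sum-exp smooth max $F_\beta(x) := \beta^{-1}\log\sum_{j=1}^p e^{\beta x_j}$, which satisfies $0 \leq F_\beta(x) - \max_j x_j \leq \beta^{-1}\log p$ and whose partials admit the clean form $\partial_j F_\beta = \pi_j$ and $\partial_j\partial_k F_\beta = \beta(\pi_j \delta_{jk} - \pi_j \pi_k)$, with $\pi_j = e^{\beta x_j}/\sum_\ell e^{\beta x_\ell}$ forming a probability vector. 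In parallel I would construct a $C^2$ function $g : \R \to [0,1]$ with $\mathbf{1}_A \leq g \leq \mathbf{1}_{A^\delta}$ and $\|g^{(k)}\|_\infty \leq C_k \delta^{-k}$ for $k=1,2$, by standard mollification of the $1$-Lipschitz map $x \mapsto d(x,A)$ composed with a fixed smooth cutoff; the parameter $\beta$ will be optimized at the end.

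Next, I would realize $X$ and $Y$ as independent and form the Gaussian interpolation $W(t) = \mu + \sqrt{1-t}(X-\mu) + \sqrt{t}(Y-\mu)$, so that $W(t) \sim N(\mu,(1-t)\Sigma^X + t\Sigma^Y)$ for $t \in [0,1]$. Differentiating $t \mapsto \bE[(g \circ F_\beta)(W(t))]$ and applying Gaussian Stein's identity to the resulting cross terms yields the standard interpolation identity
\[
\bE[(g\circ F_\beta)(Y)] - \bE[(g\circ F_\beta)(X)] = \tfrac{1}{2}\int_0^1 \sum_{j,k}(\Sigma^Y_{jk} - \Sigma^X_{jk})\, \bE[\partial_j\partial_k(g\circ F_\beta)(W(t))]\,dt.
\]
By the chain rule, $\partial_j\partial_k(g\circ F_\beta) = g''(F_\beta)\pi_j\pi_k + g'(F_\beta)\beta(\pi_j\delta_{jk}-\pi_j\pi_k)$. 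Using $|\Sigma^Y_{jk}-\Sigma^X_{jk}|\leq \Delta$ together with $\sum_{j,k}\pi_j\pi_k = 1$ and $\sum_{j,k}|\pi_j\delta_{jk}-\pi_j\pi_k| = 2(1-\sum_j\pi_j^2) \leq 2$, one gets the pointwise estimate
\[
\left|\sum_{j,k}(\Sigma^Y_{jk}-\Sigma^X_{jk})\,\partial_j\partial_k(g\circ F_\beta)\right| \leq \Delta\bigl(\|g''\|_\infty + 2\beta\|g'\|_\infty\bigr) \leq C\Delta(\delta^{-2}+\beta\delta^{-1}).
\]

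Finally I would chain $\bP(Z\in A) \leq \bE[g(Z)]$ with the Lipschitz estimate $|\bE[g(Z)] - \bE[g(F_\beta(X))]| \leq C\delta^{-1}\beta^{-1}\log p$ (and the symmetric bound on the $Y$ side) together with $\bE[g(\tilde Z)]\leq \bP(\tilde Z \in A^\delta)$ to obtain
\[
\bP(Z\in A) - \bP(\tilde Z \in A^\delta) \leq 2C\delta^{-1}\beta^{-1}\log p + C\Delta\delta^{-2} + C\Delta\beta\delta^{-1}.
\]
The choice $\beta = \sqrt{\log p/\Delta}$ balances the first and third terms into $C\delta^{-1}\sqrt{\Delta\log p}$. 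For the residual $\Delta\delta^{-2}$ term, either $\delta^{-1}\sqrt{\Delta\log p} > 1$ and the conclusion is trivial, or else $\delta \geq \sqrt{\Delta\log p} \geq \sqrt{\Delta/\log p}$, which is exactly the condition $\Delta\delta^{-2} \leq \delta^{-1}\sqrt{\Delta\log p}$, so this term is absorbed into the stated bound.

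The conceptual heart of the argument, and the step most likely to demand care, is the Hessian estimate above: the rank-one-minus-diagonal structure of $\nabla^2 F_\beta$ together with the normalization $\sum_j\pi_j = 1$ makes the contribution of $g'$ scale only linearly in $\beta$ rather than quadratically, which is precisely what permits the balancing over $\beta$ to produce the sharp $\sqrt{\Delta\log p}$ rate rather than something weaker.
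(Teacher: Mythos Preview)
Your proposal is correct and follows essentially the same route as the paper: smooth-max approximation $F_\beta$, a mollified indicator $g$, the Slepian--Stein Gaussian interpolation identity, the Hessian bound $\sum_{j,k}|\partial_j\partial_k(g\circ F_\beta)|\le \|g''\|_\infty+2\beta\|g'\|_\infty$, and then balancing $\beta$ against $\Delta$ and $\log p$. The only cosmetic difference is that the paper outsources the interpolation and the optimization over $\beta$ to \cite{CCK2} (arriving directly at $\|g''\|_\infty\Delta/2+2\|g'\|_\infty\sqrt{2\Delta\log p}$ for $g(\max_j\cdot)$), whereas you carry these steps out explicitly; your extra $\delta^{-1}\beta^{-1}\log p$ term, arising from the Lipschitz passage between $\max$ and $F_\beta$, is exactly what gets balanced away inside that cited argument.
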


\section{Proofs for Section \ref{sec: couplings}}
\label{sec: proofs}

Recall the definition of $K_{n}$:
\[
K_n = K_{n} (v,A,b,\sigma,B,\eta) = \log N_{B}(\eta) + v(\log n\vee\log (A b/\sigma)).
\]

\subsection{Proof of Theorem \ref{thm: inf dim GAR}}

The proof relies on the following form of Strassen's theorem.

\begin{lemma}[Strassen's theorem]\label{thm: strassen}
Let $\mu$ and $\nu$ be Borel probability measures on $\R$. Let $\varepsilon>0$ and $\delta>0$. Suppose that $\mu(A)\leq \nu(A^{\delta})+\varepsilon$ for every Borel subset $A$ of $\R$. Let $V$ be a random variable with distribution $\mu$. Then there is a random variable $W$ with distribution $\nu$ such that $\Pr(|V-W|>\delta)\leq \varepsilon$.
\end{lemma}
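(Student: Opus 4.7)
The plan is to split the proof into two logically independent steps. Step A: produce a Borel probability measure $\pi$ on $\R \times \R$ with marginals $\mu$ and $\nu$ that satisfies $\pi(\{(v,w) : |v - w| > \delta\}) \leq \varepsilon$. Step B: given such $\pi$, realize it on the prescribed probability space by constructing $W$ as a measurable function of $V$ and an auxiliary uniform random variable on $[0,1]$, independent of $V$. Step B is the routine one: because $\R$ is Polish, $\pi$ disintegrates as $\pi(dv, dw) = \mu(dv) K(v,dw)$ for a measurable probability kernel $K$; setting $W$ equal to the quantile inverse $(v,u) \mapsto \inf\{w : K(v,(-\infty,w]) \geq u\}$ evaluated at $(V, U)$, where $U$ is the independent uniform variable furnished by the Lebesgue factor of the product probability space, produces $(V,W) \sim \pi$, hence $W \sim \nu$ and $\Pr(|V-W|>\delta) = \pi(\{|v-w|>\delta\}) \leq \varepsilon$. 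Joint measurability of the quantile inverse is standard.

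For Step A, the plan is a discretization plus finite combinatorial plus weak-limit argument. For each integer $m \geq 1$, partition $\R$ into disjoint half-open intervals $\{I_k^{(m)}\}$ of length $1/m$, pick representatives $x_k^{(m)} \in I_k^{(m)}$, and set $\mu_m = \sum_k \mu(I_k^{(m)})\delta_{x_k^{(m)}}$, $\nu_m = \sum_k \nu(I_k^{(m)})\delta_{x_k^{(m)}}$. Applying the hypothesis to sets of the form $A = \bigcup_{k \in B} I_k^{(m)}$ and using the inclusion $A^\delta \subseteq \bigcup_{\ell : |x_\ell^{(m)} - x_k^{(m)}| \leq \delta + 2/m \text{ for some } k \in B} I_\ell^{(m)}$ shows that for every finite subset $B$ of atoms of $\mu_m$ one has $\mu_m(B) \leq \nu_m(B^{\delta + 2/m}) + \varepsilon$. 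After truncating to a large window capturing mass at least $1 - \varepsilon/m$ (possible by tightness), this is exactly the Hall / max-flow min-cut marriage condition on the bipartite graph whose nodes are the atoms of $\mu_m$ and $\nu_m$ with edges $x_j^{(m)} \to x_k^{(m)}$ whenever $|x_j^{(m)} - x_k^{(m)}| \leq \delta + 2/m$, so the discrete Strassen marriage lemma delivers a coupling $\pi_m$ of $\mu_m, \nu_m$ with $\pi_m(\{|v-w|>\delta + 2/m\}) \leq \varepsilon + o(1)$.

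Finally, since the marginals of $\pi_m$ converge weakly to $\mu$ and $\nu$, the family $\{\pi_m\}$ is tight on $\R^2$, so a subsequence converges weakly to some $\pi$ whose marginals are $\mu$ and $\nu$. For each fixed $m_0$, the Portmanteau theorem applied to the open set $\{|v-w|>\delta+2/m_0\}$ gives $\pi(\{|v-w|>\delta+2/m_0\}) \leq \liminf_{m \to \infty} \pi_m(\{|v-w|>\delta+2/m_0\}) \leq \varepsilon$; letting $m_0 \to \infty$ and invoking continuity of probability on the increasing union $\bigcup_{m_0} \{|v-w|>\delta+2/m_0\} = \{|v-w|>\delta\}$ yields $\pi(\{|v-w|>\delta\}) \leq \varepsilon$, finishing Step A. The main obstacle is the finite combinatorial step, which is the real content of Strassen's theorem: verifying Hall's condition on every finite subfamily from the hypothesis requires careful bookkeeping of the $2/m$ discretization loss and the tail truncation error, and then invoking max-flow min-cut (or bipartite matching with weights) to produce $\pi_m$. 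Once this combinatorial core is in place, the remaining steps (weak compactness, disintegration, quantile-inverse construction) are standard and sidestep any measure-theoretic subtlety thanks to the Polish structure of $\R$ and the augmented probability space provided by the paper.
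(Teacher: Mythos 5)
Your argument is correct, but note that the paper itself does not prove this lemma at all: its ``proof'' is the single line ``See Lemma 4.1 in \cite{CCK1},'' which in turn points to the literature (Strassen's 1965 paper, Dudley's treatment of Strassen's theorem). There is therefore no internal argument to compare against; what you supply is a full, self-contained proof of a result the paper treats as a black box. Your route is the classical one: discretize both marginals on an interval grid of mesh $1/m$, transfer the hypothesis $\mu(A)\le\nu(A^{\delta})+\varepsilon$ to a deficient Hall/max-flow condition on the resulting bipartite graph at distance $\delta+2/m$, truncate to a window so that the combinatorial lemma applies to a finite graph, extract $\pi_m$, pass to a weakly convergent subsequence, and use Portmanteau on the open sets $\{|v-w|>\delta+2/m_0\}$ together with continuity from below to reach $\pi(\{|v-w|>\delta\})\le\varepsilon$; then realize $\pi$ using the given $V$ by disintegrating $\pi$ over its first marginal and applying the conditional quantile transform to the independent uniform furnished by the Lebesgue factor of the augmented space. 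Two points worth tightening: the ``$\varepsilon+o(1)$'' after the marriage step should be spelled out as $\varepsilon$ plus the truncation mass (at most $2\varepsilon/m$), since the deficient marriage theorem itself gives exactly $\varepsilon$ on the finite graph; and the Portmanteau application implicitly uses that for $m\ge m_0$ one has $\pi_m(\{|v-w|>\delta+2/m_0\})\le\pi_m(\{|v-w|>\delta+2/m\})$, which is worth saying explicitly. These are cosmetic; the proof stands.
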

\begin{proof}[Proof of Lemma \ref{thm: strassen}]
See Lemma 4.1 in \cite{CCK1}.
\end{proof}

\begin{proof}[Proof of Theorem \ref{thm: inf dim GAR}]
By Strassen's theorem, it is sufficient to prove that for every Borel subset $A$ of $\R$, 
\begin{equation}
\bP(Z \in A) \leq \bP\{  \tilde{Z} \in A^{C_{1} (\eta+\delta^{(1)}_{n})} \} + C_{2}(\gamma+n^{-1}),  \label{eq: desired inequality1}
\end{equation}
where $\tilde{Z} = \sup_{f \in \mF} (B(f)+G_{P}f)$. The rest of the proof is divided into several steps. In the following, $C$ denotes a positive constant that depends only on $q$; the value of $C$ may change from place to place. 

\medskip

{\bf Step 1}. 
The first step is to ``discretize'' the empirical and Gaussian processes. 
To this end, take
\[
 \varepsilon=\sigma/(b n^{1/2}), \ N=2\cdot N(\mF,e_P, \eps b)\cdot N_B(\eta).
\]
Since $N(\mF,e_P, \eps b) \leq (4A/\eps)^{v}$ by approximation of $P$ by a finitely discrete probability measure and assumption (B), we have $\log N \leq CK_{n}$.
By definition, there exist $f_{1},\dots,f_{N} \in \mF$ such that for every $f\in\mF$, there exists $1\leq j\leq N$ with $e_P(f,f_j) < \eps b$ and $|B(f)-B(f_j)|<\eta$. 
Note that under the present assumption, the Gaussian process $G_{P}$ can be extended to the linear hull of $\mF$ in such a way that $G_{P}$ has linear sample paths \citep[see][Theorem 3.1]{D99}. 
Hence letting $\mF_\eps := \{f-g:f,g\in\mF, e_P(f,g)<\eps b\}$, 
we conclude that
\begin{align*}
&0\leq \sup_{f\in\mF}(B(f)+\mathbb{G}_n f)-\max_{1\leq j\leq N}(B(f_j)+\mathbb{G}_n f_j)\leq \eta+\|\mathbb{G}_n\|_{\mF_\eps}, \\
&0\leq \sup_{f\in\mF}(B(f)+G_P f)-\max_{1\leq j\leq N}(B(f_j)+G_P f_j)\leq \eta+\|G_P\|_{\mF_\eps}. 
\end{align*}

\medskip

{\bf Step 2}. Here we wish to show that 
\begin{equation}
\bP \{ \|G_P\|_{\mF_\eps} > C \sqrt{\sigma^2 K_n/n} \} \leq 2n^{-1}.
\end{equation}
This follows from the Borell-Sudakov-Tsirel'son inequality \citep[see][Proposition A.2.1]{VW96} complemented with Dudley's maximal inequality for Gaussian processes \citep[see][Corollary 2.2.8]{VW96}. 

First, by the Borell-Sudakov-Tsirel'son inequality, we have
\[
\bP \{ \|G_P\|_{\mF_\eps} > \bE[\|G_P\|_{\mF_\eps}] + \eps b \sqrt{2\log n} \} \leq 2n^{-1}.
\]
Second, by Dudley's maximal inequality together with the fact that $N(\mF_{\eps},e_{P},\tau) \leq N^{2}(\mF,e_{P},\tau/2) \leq (8Ab/\tau)^{2v}$, we have 
\[
\bE[\|G_P\|_{\mF_\eps}] \leq C \eps b \sqrt{v \log (8Ab/\eps)} \leq C \sqrt{\sigma^{2} K_{n}/n}.
\]
Combining these inequalities, together with the fact that $\log n \leq K_n$, leads to the desired inequality. 

\medskip

{\bf Step 3}. We wish to show that
\begin{equation}
\label{eq: emp process bound Step 3 - 1}
\bP \left \{  \|\bG_n\|_{\mF_\varepsilon} >  C b K_n/(\gamma^{1/q}n^{1/2-1/q}) \right \} \leq \gamma.
\end{equation}
Applying Lemma \ref{concentration} with $\alpha=\gamma^{-1/q}$ and $t=\gamma^{-2/q}$ to $\mF_{\eps}$, we have with probability at least $1-\gamma$, 
\[
\|\bG_n\|_{\mF_{\varepsilon}} \leq C \{ \gamma^{-1/q}\bE[\|\bG_n\|_{\mF_\varepsilon}] + (\sigma_\varepsilon+n^{-1/2}\|M_\varepsilon\|_q)\gamma^{-1/q}+n^{-1/2}\|M_\varepsilon\|_2\gamma^{-1/q} \},
\]
where $\sigma_{\varepsilon} := \sup_{f\in\mF_{\varepsilon}}(Pf^2)^{1/2}\leq \varepsilon b=\sigma/n^{1/2}$ and $M_{\eps} := 2\max_{1\leq i\leq n}F(X_i)$. 
Here $\|M_\varepsilon\|_2\leq \|M_\varepsilon\|_q \leq  2n^{1/q} b$. In addition, by Lemma \ref{cor: maximal}, we have
\[
\bE[\|\bG_n\|_{\mF_{\varepsilon}}] \leq C \{ \sigma(K_n/n)^{1/2}+bK_n/n^{1/2-1/q} \}\leq Cb K_n/n^{1/2-1/q}.
\]
Combining these inequalities leads to (\ref{eq: emp process bound Step 3 - 1}).

\medskip

{\bf Step 4}. Let $Z^\varepsilon=\max_{1\leq j\leq N}(B(f_j)+\bG_n f_j)$ and $\tilde{Z}^{\eps} = \max_{1 \leq j \leq N} (B(f_{j}) + G_{P}(f_{j}))$. Here we apply Theorem \ref{thm: fin dim main} to show that whenever
\begin{equation}\label{eq: delta bound step 3 - 1}
\delta\geq 2c\sigma n^{-1/2}(\log N)^{3/2}\cdot(\log n)
\end{equation}
for some universal constant $c>0$, we have for every Borel subset $A$ of $\R$, 
\begin{equation}\label{eq: Step 4 - 1}
\bP (Z^\varepsilon \in A) \leq \bP(\tilde{Z}^{\eps} \in A^{C_7\delta}) + C \left ( \frac{b\sigma^2 K_n^2}{\delta^3\sqrt{n}} + \frac{b^q K_n^{q}}{\delta^q n^{q/2-1}} + \frac{1}{n} \right ).
\end{equation}

Let $\tilde{X}_{i}=(f_j(X_i)-Pf_j)_{1 \leq j \leq N}, 1 \leq i \leq n$, and let $\tilde{Y} = (G_{P}f_{j})_{1 \leq j \leq N}$.
Then as $\tilde{X}_{1},\dots,\tilde{X}_{n}$ are i.i.d., 
\begin{align*}
L_{n} &= \max_{1\leq j\leq N}\bE[|\tilde{X}_{1 j}|^3] = \sup_{f \in \mF}\bE[|f(X)-Pf|^3] \leq 8 \sup_{f \in \mF} P|f|^{3} \leq 8\sigma^{2} b, \\
M_{n,X}(\delta)
&=\bE\left[\max_{1\leq j\leq N}|\tilde{X}_{1 j}|^3\cdot 1\left\{\max_{1\leq j\leq N}|\tilde{X}_{1 j}|>\delta\sqrt{n}/\log N\right\}\right]\\
&\leq \frac{\log^{q-3}N }{(\delta\sqrt{n})^{q-3}}\bE\left[\max_{1\leq j\leq N}|\tilde{X}_{1 j}|^q\right] \leq \frac{2^{q} b^q\log^{q-3}N }{(\delta\sqrt{n})^{q-3}}.
\end{align*}

To bound $M_{n,Y}(\delta)$, let $\| \cdot \|_{\psi_{1}}$ denote the Orlicz norm associated with the Young modulus $\psi_{1}(x) = e^{x}-1$, that is, $\| \xi \|_{\psi_{1}} = \inf \{ u > 0 : \bE[\psi_1 (|\xi|/u) ] \leq 1 \}$. Then it is routine to verify that there exists a universal constant $c > 0$ such that $\| \max_{1\leq j\leq N}|\tilde{Y}_{j}| \|_{\psi_{1}}  \leq c\sigma \sqrt{\log N}$.
Hence, by Markov's inequality, for every $x > 0$, 
\[
\bP \left(\max_{1\leq j\leq N}|\tilde{Y}_{j}|>x \right) \leq 2\exp\left(-\frac{x}{c \sigma\sqrt{\log N}}\right).
\]
Therefore, by Lemma \ref{lem: truncated moment}, whenever $\delta\geq 2c\sigma n^{-1/2}(\log^{3/2}N)\cdot (\log n)$,
\begin{align*}
M_{n,Y}(\delta)
&=\bE\left[\max_{1\leq j\leq N}|\tilde{Y}_{j}|^3\cdot 1\left\{\max_{1\leq j\leq N}|\tilde{Y}_{j}|>\delta\sqrt{n}/\log N \right\}\right]\\
&\leq 12(\delta\sqrt{n}/\log N + c\sigma\sqrt{\log N})^3\exp\left(-\frac{\delta\sqrt{n}}{c\sigma \log^{3/2}N}\right)\\
&\leq C n^{-2} (\delta\sqrt{n}/\log N)^3. 
\end{align*}
Application of Theorem \ref{thm: fin dim main} with these bounds, together with the bound $\log N \leq CK_{n}$,  leads to (\ref{eq: Step 4 - 1}).

\medskip

{\bf Step 5}. In the previous step, take
\[
\delta = C' \left \{ \frac{(b\sigma^2 K_n^2)^{1/3}}{\gamma^{1/3}n^{1/6}}+\frac{b K_n}{\gamma^{1/q}n^{1/2-1/q}}\right \},
\]
where $C' > 0$ is a large enough but universal constant. 
It is easy to check that for this choice of $\delta$, (\ref{eq: delta bound step 3 - 1}) holds under the condition $K_n^3\leq n$.
Indeed, since $q\geq 4$, $b \geq \sigma$, $\log n \leq  K_n$ and $\log N \leq C K_n$, we have $2c \sigma n^{-1/2}(\log^{3/2} N)\cdot (\log n) \leq C'\sigma K_n^{3/2}/n^{4/9} \leq C' b^{1/3}\sigma^{2/3}K_n^{2/3}/(\gamma^{1/3}n^{1/6})\leq \delta $. Therefore, by Step 3, we have for every Borel subset $A$ of $\R$, 
\[
\bP ( Z^{\eps} \in A ) \leq \bP (\tilde{Z}^{\eps} \in A^{C_{7} \delta}) + C(\gamma + n^{-1}).
\]
The desired inequality (\ref{eq: desired inequality1}) thus follows from combining Steps 1-5. 
\end{proof}

\subsection{Proof of Theorem \ref{thm: inf dim MB}}

The proof of Theorem \ref{thm: inf dim MB} relies on a conditional version of Strassen's theorem due to \cite{MP91}.

\begin{lemma}
\label{lem: conditional strassen}
Let $V$ be a real-valued random variable defined on a probability space $(\Omega,\mathcal{A},\bP)$, and let $\mathcal{C}$ be a countably generated sub $\sigma$-field of $\mathcal{A}$. 
Assume that there exists a uniform random variable on $[0,1]$ independent of $\mathcal{C} \vee \sigma (V)$. Let $G(\cdot \mid \mathcal{C})$ be a regular conditional distribution  on the Borel $\sigma$-field of $\R$ given $\mathcal{C}$,  and suppose that for some $\delta > 0$ and $\eps > 0$,
\[
\bE\left[ \sup_{A} \{ \bP(V \in A \mid \mathcal{C}) - G(A^{\delta} \mid \mathcal{C}) \} \right] \leq \eps,
\]
where $\sup_{A}$ is taken over all Borel subsets $A$ of $\R$. Then there exists a random variable $W$ such that the  conditional distribution of $W$ given $\mathcal{C}$ coincides with $G(\cdot \mid \mathcal{C})$, and moreover $\bP (|V-W| > \delta) \leq \eps$. 
\end{lemma}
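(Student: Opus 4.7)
The plan is to apply the unconditional Strassen theorem (Lemma \ref{thm: strassen}) fiberwise over the values of the conditioning $\sigma$-field $\mathcal{C}$, to extract a measurable coupling kernel through a selection argument, and finally to realize $W$ via the quantile transform using the auxiliary uniform $[0,1]$ random variable $U$ which by hypothesis is independent of $\mathcal{C} \vee \sigma(V)$.

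First, because $V$ takes values in the Polish space $\R$ and $\mathcal{C}$ is countably generated, fix a regular conditional distribution $F(\cdot \mid \mathcal{C})$ of $V$ given $\mathcal{C}$. Set $\eps(\omega) := \sup_{A} \{F(A \mid \mathcal{C})(\omega) - G(A^{\delta} \mid \mathcal{C})(\omega)\}$; by approximating arbitrary Borel sets through a countable generating family (for instance intervals with rational endpoints), one checks that this is a $\mathcal{C}$-measurable $[0,1]$-valued function with $\bE[\eps(\omega)] \leq \eps$. For every $\omega$, the two measures $F(\cdot \mid \mathcal{C})(\omega)$ and $G(\cdot \mid \mathcal{C})(\omega)$ on $\R$ satisfy the hypothesis of the unconditional Strassen theorem at scale $\delta$ with defect $\eps(\omega)$, so there exists a joint probability $\pi_\omega$ on $\R^{2}$ with the two prescribed marginals and $\pi_\omega\{(v,w):|v-w|>\delta\} \leq \eps(\omega)$.

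The main obstacle is to select $\pi_\omega$ measurably in $\omega$. I would address this via a measurable selection theorem of Kuratowski--Ryll-Nardzewski type, applied to the set-valued map sending $\omega$ to the (nonempty, by Strassen) set of joint laws on $\R^{2}$ satisfying the two marginal constraints and the defect constraint. The hypotheses of the selection theorem reduce to measurability of the graph together with closedness of each section in the weak topology, which holds because the marginals depend weakly continuously on $\pi$ and the $\pi$-measure of the closed set $\{|v-w|\leq \delta\}$ is upper semicontinuous in $\pi$. Once a measurable selection $\omega \mapsto \pi_\omega$ is obtained, disintegrate it as $\pi_\omega(dv, dw) = F(dv \mid \mathcal{C})(\omega)\, \kappa(\omega, v, dw)$ to produce a measurable Markov kernel $\kappa$.

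Finally, define $W := Q_{\kappa(\omega, V, \cdot)}(U)$, where $Q_\mu$ denotes the quantile function of a distribution $\mu$ on $\R$. Independence of $U$ from $\mathcal{C} \vee \sigma(V)$ gives that the conditional law of $W$ given $(\mathcal{C}, V)$ is $\kappa(\omega, V, \cdot)$; integrating $V$ against $F(\cdot \mid \mathcal{C})$ then shows the conditional law of $W$ given $\mathcal{C}$ coincides with $G(\cdot \mid \mathcal{C})$, as required. Moreover, by Fubini,
\[
\bP(|V-W| > \delta) = \bE\!\left[\pi_\omega\{(v,w):|v-w|>\delta\}\right] \leq \bE[\eps(\omega)] \leq \eps,
\]
completing the proof.
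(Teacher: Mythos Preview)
The paper does not give its own proof of this lemma: it simply cites Theorem~4 of Monrad and Philipp \cite{MP91}. Your proposal therefore goes well beyond what the paper does, supplying an actual argument rather than a reference.

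The strategy you outline---fiberwise Strassen, measurable selection of a coupling kernel, and realization via the quantile transform against the auxiliary uniform variable---is the natural route and is essentially how such conditional Strassen-type results are proved (including, in spirit, in \cite{MP91}). The sketch is sound, but two steps would need more detail in a full proof. First, the claim that $\eps(\omega)$ is $\mathcal{C}$-measurable: reducing the uncountable supremum over Borel sets to a countable family is standard (one passes to closed sets using $A^\delta=(\bar A)^\delta$ and then approximates), but ``intervals with rational endpoints'' alone do not obviously suffice, since the class must be rich enough to approximate arbitrary closed sets from outside while controlling the $\delta$-enlargement. Second, the disintegration $\pi_\omega(dv,dw)=F(dv\mid\mathcal{C})(\omega)\,\kappa(\omega,v,dw)$ with $\kappa$ jointly measurable in $(\omega,v)$ does not follow automatically from measurability of $\omega\mapsto\pi_\omega$; one needs a parametrized disintegration argument on the Polish space $\R^2$, which is available but deserves an explicit reference or justification.
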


\begin{proof}
See Theorem 4 in \cite{MP91}.
\end{proof}

\begin{proof}[Proof of Theorem \ref{thm: inf dim MB}]
Here $C$ denotes a positive constant that depends only on $q$; the value of $C$ may change from place to place. In addition, to ease the notation, we write $a \lesssim b$ if $a \leq Cb$. 
By Lemma \ref{lem: conditional strassen}, since $\sigma(X_{1}^{n})$ is countably generated by the construction of the probability space (in particular, recall that we have assumed that $S$ is a separable metric space), it is sufficient to find an event  $E \in \sigma (X_{1}^{n})$ such that $\bP(E) \geq 1-\gamma-n^{-1}$, and on this event, the inequality
\begin{equation}
\bP( Z^{e} \in A \mid X_{1}^{n}) \leq \bP \{ \tilde{Z} \in A^{C (\eta + \delta_{n}^{(2)})} \} + C (\gamma + n^{-1}) \label{eq: desired inequality2}
\end{equation}
holds for every Borel subset $A$ of $\R$, where $\tilde{Z} = \sup_{f \in \mF} (B(f)+G_{P}f)$.

We first specify such an event, and then show that on this event, (\ref{eq: desired inequality2}) holds for every Borel subset $A$ of $\R$. 
Applying Lemma \ref{concentration} with $\alpha=\gamma^{-1/q}$ and $t=(\gamma/2)^{-2/q}$ to $\mF$, we have with probability at least $1-\gamma/2$,
\[
\|\bG_n\|_{\mF} \lesssim \gamma^{-1/q}\bE[\|\bG\|_{\mF}] + (\sigma + n^{-1/2}\|M\|_q)\gamma^{-1/q}+n^{-1/2}\|M\|_2\gamma^{-1/q},
\]
where $M:=\max_{1\leq i\leq n}F(X_i)$ satisfies $\|M\|_2\leq \|M\|_q=(\bE[|M|^q])^{1/q}\leq n^{1/q} b$. In addition, by Lemma \ref{cor: maximal},
\[
\bE[\|\bG_n\|_\mF] \lesssim \sigma K_n^{1/2}+\|M\|_2K_n n^{-1/2}\leq \sigma K_n^{1/2} + b K_n n^{-1/2+1/q}.
\]
Hence with probability at least $1-\gamma/2$,
\begin{equation}\label{eq: event1}
\|\bG_n\|_{\mF} \lesssim \sigma K_n^{1/2}/\gamma^{1/q}+b K_n/(\gamma^{1/q}n^{1/2-1/q}).
\end{equation}
Moreover, applying Lemma \ref{concentration} again with $\alpha=\gamma^{-2/q}$ and $t=(\gamma/2)^{-4/q}$ to the class $\mF\cdot\mF := \{f\cdot g:f,g\in\mF\}$, we have with probability at least $1-\gamma/2$,
\[
\|\bG\|_{\mF\cdot\mF}\lesssim \gamma^{-2/q}\bE[\|\bG_n\|_{\mF\cdot\mF}]+(\bar{\sigma}+n^{-1/2} \|M^2\|_{q/2})\gamma^{-2/q}+n^{-1/2}\|M^2\|_2\gamma^{-2/q},
\]
where $\bar{\sigma}^2 := \sup_{f\in\mF\cdot \mF}P f^2 \leq \sup_{f\in\mF}P f^4\leq b^2\sigma^2$. 
In addition, $\| M^2\|_2\leq \|M^2\|_{q/2}=(\bE[|M|^q])^{2/q}\leq n^{2/q} b^2$, and as shown in the proof of Corollary 2.2 in \cite{CCK1},
$\bE[\|\bG_n\|_{\mF\cdot\mF}]\lesssim b\sigma K_n^{1/2}+b^2K_n n^{-1/2+2/q}$.
Hence with probability at least $1-\gamma/2$,
\begin{equation}\label{eq: event2}
\|\bG_n\|_{\mF\cdot\mF}\lesssim b\sigma K_n^{1/2}/\gamma^{2/q}+b^2K_n/(\gamma^{2/q}n^{1/2-2/q}).
\end{equation}
Finally, by Markov's inequality, with probability at least $1-n^{-1}$,
\begin{equation}\label{eq: event3}
\|F\|_{P_n,2}\leq n^{1/2} \|F\|_{P,2}. 
\end{equation}

Define $E$ as the intersection of the events in (\ref{eq: event1}), (\ref{eq: event2}), and (\ref{eq: event3}). Then $E \in \sigma(X_{1}^{n})$ and  $\bP(E) \geq 1-\gamma-n^{-1}$. 
The rest of the proof, which is divided into several steps, is devoted to proving (\ref{eq: desired inequality2}) for each fixed $X_{1},\dots,X_{n}$ satisfying (\ref{eq: event1})--(\ref{eq: event3}). 

In the following, we use the notation introduced in Step 1 of the proof of Theorem \ref{thm: inf dim GAR}. 
Then
\begin{align}
&0\leq \sup_{f\in\mF}(B(f)+\mathbb{G}_n^e f)-\max_{1\leq j\leq N}(B(f_j)+\mathbb{G}_n^e f_j)\leq \eta+\|\mathbb{G}_n^e\|_{\mF_\eps},\label{eq: MB appr1}\\
&0\leq \sup_{f\in\mF}(B(f)+G_P f)-\max_{1\leq j\leq N}(B(f_j)+G_P f_j)\leq \eta+\|G_P\|_{\mF_\eps}.\label{eq: MB appr2}
\end{align}

\medskip

{\bf Step 1}. By Step 2 of the proof of Theorem \ref{thm: inf dim GAR}, we have
\[
\bP(\|G_P\|_{\mF_\eps} > C \sqrt{\sigma^2 K_n/n} ) \leq 2n^{-1}.
\]

{\bf Step 2}. Here we wish to show that on the event $E$, 
\begin{equation}\label{eq: Step 2 - 2}
\bP \left \{  \|\bG_n^e\|_{\mF_{\varepsilon}} > C \{ (b\sigma K_n^{3/2})^{1/2}/(\gamma^{1/q}n^{1/4})+b K_n/(\gamma^{1/q}n^{1/2-1/q}) \}  \mid X_{1}^{n} \right \} \leq 2n^{-1}.
\end{equation}

Fix any $X_{1},\dots,X_{n}$ satisfying (\ref{eq: event1})--(\ref{eq: event3}). Let us write $(\mF-\mF)^2:=\{(f-g)^2:f,g\in\mF\}$. 
Then observe that 
\begin{align*}
\sigma_n^2&:=\sup_{f\in\mF_\eps} P_n f^2 \leq \sup_{f\in\mF_{\varepsilon}} \bE[f(X)^2]+n^{-1/2} \|\bG_n\|_{(\mF-\mF)^2}  \\
&\lesssim (\eps b)^2 +n^{-1/2} \|\bG_n\|_{\mF\cdot\mF} \lesssim \sigma^2/n + b\sigma K_n^{1/2}/(\gamma^{2/q} n^{1/2})+b^2 K_n/(\gamma^{2/q} n^{1-2/q})   \\
&\lesssim  b\sigma K_n^{1/2}/(\gamma^{2/q} n^{1/2})+b^2 K_n/(\gamma^{2/q} n^{1-2/q}), 
\end{align*}
where in the second line, we used the inequality $\|\bG_n\|_{(\mF-\mF)^2} =\sup_{f,g\in \mF}|\bG_n(f-g)^2| \leq 4\|\bG_n\|_{\mF\cdot\mF}$. 
Now, note that conditional on $X_{1}^{n}$,  $\bG_n^e$ is a centered Gaussian process, and $\bE[(\bG_n^e f)^2 \mid X_{1}^{n}] \leq P_n f^2\leq \sigma_n^2$ for all $f \in \mF_\eps$. 
Hence by the Borell-Sudakov-Tsirel'son inequality \citep[see][Proposition A.2.1]{VW96}, 
\[
\bP\{ \|\bG_n^e\|_{\mF_\eps} > \bE[\|\bG_n^e\|_{\mF_\eps} \mid X_{1}^{n}] + \sigma_n \sqrt{2\log n} \mid X_{1}^{n} \} \leq 2 n^{-1}.
\]
To bound $\bE[\|\bG_n^e\|_{\mF_\eps} \mid X_{1}^{n}]$, observe that
\begin{align*}
\|\bG_n^e\|_{\mF_\eps}
&\leq \sup_{f\in\mF_{\eps}}\Big|\frac{1}{\sqrt{n}}\sum_{i=1}^n e_i f(X_i)\Big|+\sup_{f\in\mF_{\eps}}\Big|\frac{1}{\sqrt{n}}\sum_{i=1}^n e_i\cdot P_n f\Big| =:I+II. 
\end{align*}
By Dudley's maximal inequality \citep[see][Corollary 2.2.8]{VW96}, together with the fact that $N(\mF_{\eps},e_{P_{n}}, 2\tau \| F \|_{P_{n},2}) \leq N^{2}(\mF,e_{P_{n}},\tau \| F \|_{P_{n},2}) \leq (A/\tau)^{2v}$, we have 
\begin{align*}
\bE [ I \mid X_{1}^{n} ] &\lesssim \int_{0}^{\sigma_{n} \vee (\sigma/n^{1/2})} \sqrt{1+\log N(\mF_{\eps},e_{P_{n}}, \tau)} d\tau \\
&\lesssim (\sigma_n\vee(\sigma/n^{1/2}))\sqrt{v\log(2n^{1/2} A\|F\|_{P_n,2}/\sigma)} \lesssim (\sigma_n\vee (\sigma/n^{1/2})) K_n^{1/2}.
\end{align*}
Meanwhile, since $\|P_n\|_{\mF_\varepsilon}\leq \sigma_n$ by Jensen's inequality, we have
\[
\bE[II \mid X_{1}^{n}]  \leq \|P_n\|_{\mF_\eps}\cdot\bE\left[\Big|\frac{1}{\sqrt{n}}\sum_{i=1}^n e_i\Big|\right]\lesssim \sigma_n.
\]
Combining these inequalities leads to (\ref{eq: Step 2 - 2}).

\medskip

{\bf Step 3}. Let $Z^{e,\eps}=\max_{1\leq j\leq N}(B(f_j)+\bG_n^{e} f_j)$ and $\tilde{Z}^{\eps} = \max_{1\leq j\leq N}(B(f_j)+G_P f_j)$. We wish to show that on the event $E$, the inequality 
\[
\bP (Z^{e,\eps} \in A \mid X_{1}^{n}) \leq \bP(\tilde{Z}^{\eps} \in A^{\delta}) + \frac{C}{\delta}\left\{ \frac{(b\sigma K_n^{3/2})^{1/2}}{\gamma^{1/q}n^{1/4}} + \frac{b K_n}{\gamma^{1/q}n^{1/2-1/q}}\right \}
\]
holds for every $\delta > 0$ and every Borel subset $A$ of $\R$. 
Let
\[
\Delta := \max_{1\leq j,k\leq N}|\{ P_n (f_j f_k) - (P_n f_j) ( P_n f_k) \}- \{ P (f_j f_k) - (P f_j)(P f_k) \} |,
\]
and observe that
\begin{align*}
&|P_n (f_j f_k) - P (f_j f_k)| \leq n^{-1/2} \|\bG_n\|_{\mF\cdot\mF}, \\
&|(P_n f_j) (P_n f_k)-(P f_j)(P f_k)| \lesssim n^{-1} \|\bG_n\|_{\mF}\cdot\|\bG_n\|_{\mF}+\sigma n^{-1/2} \|\bG_n\|_{\mF}.
\end{align*}
Hence as $K_n \leq n$, it is not difficult to check that on the event $E$, 
\[
\Delta\lesssim b\sigma K_n^{1/2}/(\gamma^{2/q}n^{1/2})+b^2 K_n/(\gamma^{2/q}n^{1-2/q}). 
\]
The assertion of this step now follows from Theorem \ref{thm: mb finite dimensional} (recall $\log N \lesssim K_{n}$).

\medskip

{\bf Step 4}. Take
\[
\delta=\delta_n^{(2)} = \frac{(b\sigma K_n^{3/2})^{1/2}}{\gamma^{1 + 1/q} n^{1/4}} + \frac{b K_n}{\gamma^{1 + 1/q} n^{1/2-1/q}}.
\]
Then the desired inequality (\ref{eq: desired inequality2}) (with suitable $C_{3},C_{4}$) follows from combining (\ref{eq: MB appr1}), (\ref{eq: MB appr2}), Steps 1,2, and 3 with this choice of $\delta$. 
\end{proof}

\subsection{Proof of Theorem \ref{thm: inf dim EB}}

Here $C$ denotes a positive constant depending only on $q$; $C$ may change from place to place. In addition, to ease the notation, we write $a\lesssim b$ if $a \leq Cb$. In the proof below, we find an event $E\in\sigma(X_1^n)$ such that $\bP(E)\geq 1-\gamma-n^{-1}$, and on this event, the inequality
\begin{equation}\label{eq: desired inequality3}
\bP(Z^*\in A \mid X_1^n)\leq \bP\{Z^e\in A^{C(\eta+\delta_n^{(3)})} \mid X_1^n\} + C(\gamma + n^{-1})
\end{equation}
holds for every Borel subset $A$ of $\R$ where $Z^e=\sup_{f\in\mF}(B(f)+\bG_n^e f)$. Combining this inequality with (\ref{eq: desired inequality2}), which is established in the proof of Theorem \ref{thm: inf dim MB} (and which holds on a possibly different event $E'\in\sigma(X_1^n)$ satisfying $\bP(E')\geq 1-\gamma - n^{-1}$), the proof is completed by applying Lemma \ref{lem: conditional strassen}.

We first specify the event $E$. We use the same notation as introduced in Step 1 of the proof of Theorem \ref{thm: inf dim GAR}. Then
\begin{align}
&0\leq \sup_{f\in\mF}(B(f)+\bG_n^* f) - \max_{1\leq j\leq N}(B(f_j) - \bG_n^* f_j) \leq \eta + \|\bG_n^*\|_{\mF_\varepsilon},\label{eq: EB approximation1}\\
&0\leq \sup_{f\in\mF}(B(f)+\bG_n^e f) - \max_{1\leq j\leq N}(B(f_j) - \bG_n^e f_j) \leq \eta + \|\bG_n^e\|_{\mF_\varepsilon}.\label{eq: EB approximation2}
\end{align}
In addition, as in the proof of Theorem \ref{thm: inf dim MB}, with probability at least $1-\gamma/4$,
\begin{equation}
\label{eq: EB event1}
\|\bG_n\|_\mF \lesssim \sigma K_n^{1/2}/\gamma^{1/q}+b K_n/(\gamma^{1/q}n^{1/2-1/q}); 
\end{equation}
with probability at least $1-\gamma/4$,
\begin{equation}
\label{eq: EB event2}
\|\bG_n\|_{\mF\cdot\mF} \lesssim b\sigma K_n^{1/2}/(\gamma^{2/q})+b^2K_n/(\gamma^{2/q}n^{1/2-2/q}),
\end{equation}
and with probability at least $1-n^{-1}$,
\begin{equation}\label{eq: EB event3}
\|F\|_{P_n,2}\leq n^{1/2}\|F\|_{P,2}.
\end{equation}
Here $\mF\cdot\mF=\{f\cdot g:f,g\in\mF\}$. Moreover, by the triangle inequality,
\[
\max_{1\leq j\leq N}\sum_{i=1}^n|f_j(X_i) - P_n f_j|^3 \lesssim \max_{1\leq j\leq N}\sum_{i=1}^n|f_j(X_i)|^3
\]
and applying Lemma \ref{lem: deviation ineq nonnegative}, we have with probability at least $1-\gamma/4$,
\[
\max_{1\leq j\leq N}\sum_{i=1}^n|f_j(X_i)|^3 \lesssim \bE\left[\max_{1\leq j\leq N}\sum_{i=1}^n |f_j(X_i)|^3\right] + \gamma^{-3/q}\|M^3\|_{q/3},
\]
where $M := \max_{1\leq i\leq n}\max_{1\leq j\leq N}|f_j(X_i)|\leq  \max_{1\leq i\leq n}F(X_i)$ is such that $\|M^3\|_{q/3}\lesssim n^{3/q}b^3$. In addition, by Lemma \ref{lem: maximal ineq nonnegative},
\[
\bE\left[\max_{1\leq j\leq N}\sum_{i=1}^n |f_j(X_i)|^3\right] \lesssim n\sigma^2 b+\bE[M^3]\log N\lesssim n\sigma^2 b+n^{3/q}b^3K_n.
\]
Therefore, with probability at least $1-\gamma/4$,
\begin{equation}\label{eq: EB event4}
\max_{1\leq j\leq N}\sum_{i=1}^n |f_j(X_i) - P_n f_j|^3/n \lesssim \sigma^2 b+b^3 K_n/(\gamma^{3/q} n^{1-3/q}).
\end{equation}
Finally, by Markov's inequality, with probability at least $1-\gamma/4$,
\begin{equation}\label{eq: EB event5}
\max_{1\leq i\leq n}\max_{1\leq j\leq N}|f_j(X_i)-P_n f_j|\lesssim \max_{1\leq i\leq n}F(X_i) \lesssim \gamma^{-1/q}n^{1/q}b.
\end{equation}
Define $E$ as the intersection of the events in (\ref{eq: EB event1})-(\ref{eq: EB event5}). Then $E\in\sigma(X_1^n)$ and $\bP(E)\geq 1- \gamma - n^{-1}$. In the rest of the proof, which is divided into several steps, we prove (\ref{eq: desired inequality3}) for each fixed $X_1,\dots,X_n$ satisfying (\ref{eq: EB event1})--(\ref{eq: EB event5}).

\medskip

{\bf Step 1}. By Step 2 in the proof of Theorem \ref{thm: inf dim MB}, on the event $E$,
\[
\bP\left\{ \|\bG_n^e\|_{\mF_{\varepsilon}}> C \{ (b\sigma K_n^{3/2})^{1/2}/(\gamma^{1/q}n^{1/4})+b K_n/(\gamma^{1/q}n^{1/2-1/q}) \} \mid  X_{1}^{n}  \right \} \leq 2n^{-1}.
\]

{\bf Step 2}. Here we wish to show that on the event $E$,
\begin{equation}
\label{eq: Step 2 - 3}
\bP\left\{\|\bG_n^*\|_{\mF_{\varepsilon}}> C\{ (b\sigma K_n^{3/2})^{1/2}/(\gamma^{1/q}n^{1/4})+ b K_n/(\gamma^{1/q} n^{1/2-1/q}) \}  \mid X_{1}^{n} \right\} \leq n^{-1}.
\end{equation}

Note that conditional on $X_1^n$, $\bG_n^*$ is the empirical process associated with $n$ i.i.d. observations from the empirical distribution $P_n$. When restricted to the domain $\{X_1,\dots,X_n\}$, the class of functions $\mF$ has a constant envelope $\max_{1\leq i\leq n}F(X_i) \lesssim \gamma^{-1/q}n^{1/q}b$. 
Moreover, by the same arguments as those used in Step 2 of the proof of Theorem \ref{thm: inf dim MB},
\[
\sigma_n^2 :=\sup_{f\in\mF_\varepsilon}P_n f^2 \lesssim b\sigma K_n^{1/2}/(\gamma^{2/q}n^{1/2}) + b^2 K_n/(\gamma^{2/q} n^{1-2/q}).
\]
Hence the inequality (\ref{eq: Step 2 - 3}) follows from application of Talagrand's inequality (Lemma \ref{lem: talagrand inequality})  with $t=\log n$.

\medskip

{\bf Step 3}. Let $Z^{*,\varepsilon} = \max_{1\leq j\leq N}(B(f_j) + \bG_n^* f_j)$ and $Z^{e,\varepsilon} = \max_{1\leq j\leq N} (B(f_j) + \bG_n^e f_j)$. Here we apply Theorem \ref{thm: fin dim main} to show that whenever
\begin{equation}\label{eq: delta requirement 3}
\delta\geq C\left(\frac{b\log N}{\gamma^{1/q} n^{1/2-1/q}} + \frac{\sigma(\log^{3/2} N)\cdot (\log n)}{n^{1/2}}\right)
\end{equation}
for some sufficiently large $C>0$, on the event $E$, the inequality
\begin{equation*}
\bP(Z^{*,\varepsilon}\in A  \mid X_{1}^{n} )\leq \bP(Z^{e,\varepsilon}\in A^{C\delta}  \mid X_{1}^{n}) + C\left( \frac{b \sigma^2 K_n^2}{\delta^3n^{1/2}} + \frac{b^3 K_n^3}{\delta^3 \gamma^{3/q}n^{3/2-3/q}} + \frac{1}{n} \right)
\end{equation*}
holds for every $\delta > 0$ and every Borel subset $A$ of $\R$. 
Let $\widetilde{X}_i = (f_j(X_i) - P_n f_j)_{1\leq j\leq N}$, $1\leq i\leq n$, and let $\widetilde{Y} = (\bG_n^e f_{j})_{1\leq j\leq N}$.
Then
\begin{align*}
L_n&=\max_{1\leq j\leq N}\sum_{i=1}^n |\tilde{X}_{i j}|^3/n \lesssim \sigma^2 b+b^3 K_n/(\gamma^{3/q} n^{1-3/q}), \\
M_{n,X}(\delta)&=n^{-1} \sum_{i=1}^n\max_{1\leq j\leq N}|\tilde{X}_{i j}|^3\cdot 1\left\{\max_{1\leq j\leq N}|\tilde{X}_{i j}|>\delta\sqrt{n}/\log N\right\}=0.
\end{align*}
The last equality follows from (\ref{eq: EB event5}) since $\delta\sqrt{n}/\log N\geq C\gamma^{-1/q}n^{1/q}b$. Moreover, $\bE[\tilde Y_j^2]\leq P_n f_j^2 \leq \sigma^2 + n^{-1/2}\|\bG_n\|_{\mathcal F\cdot \mathcal F}$ for all $1\leq j\leq N$, and so by the same argument as that used in Step 4 of the proof of Theorem \ref{thm: inf dim GAR}, we have
\begin{align*}
M_{n,Y}(\delta)&=\bE \left[ \max_{1\leq j\leq N}|\tilde{Y}_{j}|^3\cdot 1\left\{\max_{1\leq j\leq N}|\tilde{Y}_{j}|>\delta\sqrt{n}/\log N\right\} \mid X_1^n \right]\\
& \lesssim n^{-2}(\delta\sqrt{n}/\log N)^3,
\end{align*}
since $\delta\geq C(\sigma^2 + n^{-1/2}\|\bG_n\|_{\mathcal F\cdot \mathcal F})^{1/2} n^{-1/2}(\log^{3/2} N)\cdot(\log n)$ for sufficiently large $C$.
The assertion of this step then follows from Theorem \ref{thm: fin dim main}.

\medskip

{\bf Step 4}. In the previous step, take
\[
\delta = C'\left\{\frac{(b\sigma^2 K_n^2)^{1/3}}{\gamma^{1/3} n^{1/6}} + \frac{b K_n}{\gamma^{1/3 + 1/q}n^{1/2-1/q}}\right\}
\]
where $C' > 0$ is a large constant that can be chosen to depend only on $q$. It is not difficult to check that for this choice of $\delta$, (\ref{eq: delta requirement 3}) holds under the condition $K_n^3\leq n$. The desired inequality (\ref{eq: desired inequality3}) then follows from combining (\ref{eq: EB approximation1}), (\ref{eq: EB approximation2}), Steps 1, 2, and 3 with this choice of $\delta$. 
\qed

\subsection{Proof of Lemma \ref{lem: AC for nonzero mean}}

We begin with proving the following lemma. 
\begin{lemma}
\label{lem: AC nonzero mean fin dim}
Let $X=(X_{1},\dots,X_{p})^{T}$ be a possibly non-centered Gaussian random vector with $\sigma_{j}^{2} := \Var (X_{j}) > 0, 1 \leq  j \leq p$. 
Then for every $\eps > 0$,
\[
\sup_{t \in \R} \bP (| \max_{1 \leq j \leq p} X_{j} -t| \leq \eps) \leq \frac{2\eps}{\underline{\sigma}} (\sqrt{2 \log p} + 2),
\]
where $\underline{\sigma} = \min_{1 \leq j \leq p} \sigma_{j}$.
\end{lemma}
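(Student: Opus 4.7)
The plan is to show that the density $f_M$ of $M := \max_{1 \leq j \leq p} X_{j}$ is uniformly bounded by $(\sqrt{2\log p} + 2)/\underline{\sigma}$; this immediately yields the claim since
\[
\sup_{t \in \R} \bP(|M - t| \leq \eps) \leq 2\eps \sup_{s \in \R} f_M(s).
\]
The argument adapts the soft-max + Stein's identity technique that underlies Theorem 3 of \cite{CCK2} (the centered case) to the present non-centered setting, taking care that the mean vector $\mu$ enters only through centered Gaussian moments, which are then controlled by the standard Gaussian maximal inequality.

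First I smooth the maximum by the soft-max $F_{\rho}(x) := \rho^{-1} \log \sum_{j=1}^{p} e^{\rho x_{j}}$, which satisfies $\max_{j} x_{j} \leq F_{\rho}(x) \leq \max_{j} x_{j} + \rho^{-1} \log p$ and whose gradient $\pi(x) := \nabla F_{\rho}(x)$ has components $\pi_{j}(x) \in [0,1]$ with $\sum_{j} \pi_{j} \equiv 1$ and $\partial_{k} F_{\rho} = \pi_{k}$. For a smooth nonnegative bump $\psi_{\eps}$ with $\psi_{\eps} \geq \mathbf{1}_{[-\eps,\eps]}$ and $\int \psi_{\eps} \leq C\eps$, one has $\bP(|F_{\rho}(X) - t| \leq \eps) \leq \bE[\psi_{\eps}(F_{\rho}(X) - t)]$. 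Writing $\tilde{X} := X - \mu$ and applying Gaussian integration by parts $\bE[\tilde{X}_{j} g(X)] = \sum_{k} \Sigma_{jk} \bE[\partial_{k} g(X)]$ with $g$ built from $\psi_{\eps} \circ (F_{\rho} - t)$ and the $\pi_{j}$'s, together with $\sum_{j} \pi_{j} \equiv 1$, produces, after routine manipulation, a density bound of the form
\[
\sup_{s \in \R} f_{F_{\rho}(X)}(s) \leq \frac{1}{\underline{\sigma}}\bigl(\bE[\max_{1 \leq j \leq p} \tilde{X}_{j}/\sigma_{j}] + 2\bigr).
\]
Crucially, the mean vector $\mu$ does not appear in this bound; only the centered $\tilde{X}$ does, so the argument ultimately reduces to controlling a centered Gaussian maximum.

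Since $\tilde{X}_{j}/\sigma_{j}$ is $N(0,1)$ for each $j$ (with arbitrary cross-correlations), the standard Gaussian maximal inequality gives $\bE[\max_{j} \tilde{X}_{j}/\sigma_{j}] \leq \sqrt{2 \log p}$. Letting $\rho \to \infty$ recovers the same bound for the density of $M$, and the lemma follows. The main obstacle is the integration-by-parts step: one must invoke Stein's identity on the centered variables $\tilde{X}_{j}$ rather than on $X_{j}$ directly, so that the shifts $\mu_{j}$ cancel out and the final density bound depends on $\mu$ only through the centered moment. Once this is arranged, everything else is a direct transcription of the centered case. An alternative route, should it be preferred, is to observe that $\bP(M \leq s) = \bP(\tilde{X}_{j} \leq s - \mu_{j}, \ \forall j)$ and invoke a Nazarov-type orthant shift inequality for centered Gaussian measures with constant $\sqrt{2\log p}+2$; the two approaches are essentially equivalent.
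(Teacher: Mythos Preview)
Your ``alternative route'' at the end---rewrite $\{M \leq s\}$ as an intersection of half-spaces for a standard Gaussian and invoke a Nazarov-type inequality---is exactly the paper's proof. The paper writes $X \stackrel{d}{=} \Sigma^{1/2}W + \mu$ with $W \sim N(0,I_p)$, so that $\{\max_j X_j \leq t\} = \{a_j^T W \leq (t-\mu_j)/\sigma_j \text{ for all } j\}$, and then applies Nazarov's Gaussian-perimeter bound (stated as a separate lemma) to the intersection of $p$ half-spaces to obtain the density bound $f_M(t) \leq \underline\sigma^{-1}(\sqrt{2\log p} + 2)$.

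Your main proposal (soft-max plus Stein's identity, following \cite{CCK2}) is a genuinely different route, but as written it has a gap at precisely the step you flag as ``the main obstacle.'' You assert that applying Stein's identity to the centered variables $\tilde X_j$ makes $\mu$ disappear from the final density bound, yet you do not show this, and it is \emph{not} a direct transcription of the centered argument. In the centered case the Stein identity is paired with the fact that $\sum_j \pi_j(X) X_j$ is close to $F_\rho(X)$ (hence to $\max_j X_j$). In the non-centered case Stein forces you to work with $\sum_j \pi_j(X)\tilde X_j$ (or its $\sigma_j^{-2}$-weighted version), but the soft-max weights $\pi_j(X)$ concentrate on the index maximizing $X_j = \tilde X_j + \mu_j$, not the index maximizing $\tilde X_j$. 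Consequently $\sum_j \pi_j(X)\tilde X_j$ is not close to $\max_j \tilde X_j$; a random $\mu$-dependent term $\sum_j \pi_j(X)\mu_j$ appears on the left side of the identity and does not obviously cancel. Controlling it is the entire content of the non-centered extension, and you have skipped it. Moreover, even in the centered case the Stein/soft-max argument of \cite{CCK2} does not produce the specific constant $\sqrt{2\log p}+2$; that constant is peculiar to Nazarov's geometric bound, so the exact inequality stated in the lemma would not emerge from your primary approach even if the gap were filled.
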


The lemma follows from the following result due essentially to Nazarov \cite{Nazarov03}; see also \cite{Klivans08}. 

\begin{lemma}[Nazarov's inequality]
\label{lem: Nazarov}
Let $W$ be a standard Gaussian random vector in $\R^{m}$, that is, $W \sim N(0,I)$. Let $A \subset \R^{m}$ be the intersection of $p$ half-spaces (a half-space in $\R^{m}$ is the set of form $\{ w \in \R^{m} : \alpha^{T} w \leq t \}$ for some $\alpha \in \R^{m}$ with $\| \alpha \| = 1$ and $t \in \R$). Then 
\[
\lim_{\delta \downarrow 0} \frac{1}{\delta} \bP (W \in A^{\delta} \backslash A) \leq \sqrt{2\log p}+2,
\]
where the limit on the left-hand side exists. 
\end{lemma}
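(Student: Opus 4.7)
My plan is to reduce the inequality to a density estimate for the maximum of a Gaussian vector, in three steps.

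\textbf{Step 1 (Geometric reduction).} Each $\alpha_j$ has unit norm and $A\subseteq\{x:\alpha_j^T x\le t_j\}$, so for every $y\in A$ one has $\alpha_j^T W - t_j = \alpha_j^T(W-y) + (\alpha_j^T y - t_j)\le \|W-y\|$. Taking the infimum over $y\in A$ gives $(\alpha_j^T W-t_j)_+\le d(W,A)$. Hence $W\in A^\delta$ forces $M:=\max_{1\le j\le p}(\alpha_j^T W - t_j)\le \delta$, and since $\{W\in A\}=\{M\le 0\}$,
\[
\bP(W\in A^\delta\setminus A) \le \bP(0<M\le \delta).
\]

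\textbf{Step 2 (Passage to a density).} The Gaussian vector $(\alpha_j^T W-t_j)_{j=1,\dots,p}$ has unit-variance components. By a small perturbation of the $\alpha_j$ (WLOG, letting the perturbation vanish at the end), its joint law is non-degenerate, so $M$ admits a continuous Lebesgue density $f_M$ with $\bP(M=0)=0$. Consequently $\lim_{\delta\downarrow 0}\delta^{-1}\bP(0<M\le \delta) = f_M(0^+)$, and it suffices to show $f_M(0^+)\le \sqrt{2\log p}+2$. Because the common shift $t_j\mapsto t_j+c$ translates $M$ by $-c$, this bound is equivalent to the uniform estimate $\sup_t f_M(t)\le \sqrt{2\log p}+2$.

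\textbf{Step 3 (Density estimate via tilting).} By the coarea formula applied to the $1$-Lipschitz map $w\mapsto \max_j(\alpha_j^T w-t_j)$,
\[
f_M(t) = \sum_{j=1}^p \phi(t+t_j)\, \bP\bigl(\alpha_k^T W\le t_k\ \forall k\ne j \,\big|\, \alpha_j^T W = t_j+t\bigr),
\]
so in particular $f_M(0^+)$ coincides with the Gaussian surface area $\gamma^+(A)$ of the polyhedron. The naive union bound on this sum is only $\sum_j \phi(t+t_j) \le p/\sqrt{2\pi}$, which is linear in $p$ and far too weak. To extract the $\sqrt{\log p}$-scaling I would combine the face representation with an exponential tilt: for each $\lambda>0$, the elementary moment bound
\[
\bE[e^{\lambda M}] = \int e^{\lambda s} f_M(s)\,ds \le \sum_{j=1}^p \bE[e^{\lambda(\alpha_j^T W - t_j)}] = \sum_{j=1}^p e^{-\lambda t_j+\lambda^2/2}
\]
pairs with a Gaussian integration-by-parts lower bound on $\int e^{\lambda s}f_M(s)\,ds$ near a candidate peak $t$, yielding after optimization $\lambda\approx\sqrt{2\log p}$ an a.e.\ bound $f_M(t)\le\sqrt{2\log p}+O(1)$. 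The explicit additive constant $2$ then emerges from careful bookkeeping of the boundary/Laplace terms.

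\textbf{Main obstacle.} The entire burden is in Step 3. The union bound fails by a $\sqrt{\log p}$ factor, and this factor must be recovered by simultaneously exploiting (i) the Borell--Tsirel'son--Ibragimov--Sudakov concentration $\Var(M)\le 1$ of a $1$-Lipschitz Gaussian functional and (ii) the Gaussian maximal estimate $\bE[M]\le \sqrt{2\log p}-\min_j t_j$. The tilting/Laplace optimization is what packages (i) and (ii) together, but producing the clean constants $\sqrt{2\log p}+2$ (rather than $c\sqrt{\log p}$ for some unspecified $c$) is the most delicate step.
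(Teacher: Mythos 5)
The paper does not prove this lemma at all: it cites it to Nazarov (2003) and Klivans, O'Donnell, Servedio (2008) and treats it as a known result. So there is no proof in the paper to compare against; the fair question is whether your sketch actually establishes the statement. It does not.

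Your Steps 1 and 2 are a correct and standard reduction: $\bP(W\in A^\delta\setminus A)\le\bP(0<M\le\delta)$ with $M=\max_j(\alpha_j^T W-t_j)$, so it suffices to bound the density $f_M$ at $0^+$, and a common-shift argument turns this into a uniform bound $\sup_t f_M(t)\le\sqrt{2\log p}+2$. The problem is Step 3, which you yourself flag as bearing the entire burden. The union bound on the moment generating function, $\bE[e^{\lambda M}]\le\sum_j e^{-\lambda t_j+\lambda^2/2}$, cannot by itself yield a pointwise bound on $f_M$. A Laplace-transform bound controls integrated tails of $f_M$, not its sup: a density respecting $\int e^{\lambda s}f_M(s)\,ds\le p\,e^{\lambda^2/2}$ for all $\lambda$ may still have a spike of height $100\sqrt{\log p}$ on an interval of length $p^{-100}$. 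The phrase ``pairs with a Gaussian integration-by-parts lower bound \dots near a candidate peak'' presupposes that $f_M$ has a well-behaved peak, but the maximum of a correlated Gaussian vector is not in general log-concave or unimodal, so there is no such peak to localize around. In short, the tilting/Laplace strategy is not a known route to this bound, and the missing ``careful bookkeeping'' is not a bookkeeping issue but the entire substance of the theorem. The actual arguments in Nazarov and in Klivans et al.\ work directly with the surface integral $\gamma^+(A)=\sum_j\phi(t_j)\,\bP(\alpha_k^T W\le t_k\ \forall k\ne j\mid \alpha_j^T W=t_j)$ — your coarea identity — and bound the sum by splitting facets according to the size of $t_j$ and exploiting the conditional probability factors in a genuinely geometric way, not via a moment bound. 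As it stands your argument proves the result for special configurations (for example orthogonal $\alpha_j$, where the facet probabilities factor) but not in general.
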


\begin{proof}[Proof of Lemma \ref{lem: AC nonzero mean fin dim}]
It is clear that the distribution of $\max_{1 \leq j \leq p} X_{j}$ is absolutely continuous, so let $f(\cdot)$ denote its density. 
Let $W$ be a standard Gaussian random vector in $\R^{p}$, and let $\mu = (\mu_1,\dots,\mu_p)^{T}= \Ep [X]$ and $\Sigma = \Ep[(X-\mu)(X-\mu)^{T}]$.
Then $X  \stackrel{d}{=} \Sigma^{1/2}W+\mu$, so that denoting by $\sigma_{j}a_{j}^{T}$ (where $a_{j} \in \R^{p}$ with $\| a_j \| = 1$) the $j$-th row of $\Sigma^{1/2}$, we obtain 
\[
\max_{1 \leq j \leq p} (\Sigma^{1/2}W+\mu)_{j} \leq t \Leftrightarrow a_{j}^{T}W \leq (t-\mu_{j})/\sigma_{j}, 1 \leq \forall j \leq p.
\]
Let $A_{t} = \{ w \in \R^{p} : a_{j}^{T} w \leq (t-\mu_{j})/\sigma_{j}, 1 \leq \forall j \leq p \}$ for $t \in \R$, and observe that
\[
f(t) = \lim_{\eps \downarrow 0} \frac{1}{\eps} \bP (W \in A_{t+\eps} \backslash A_{t}) \quad  a.e. \ t \in \R.
\] 
Moreover, since
\[
A_{t+\eps} \subset  \{ w \in \R^{p} : a_{j}^{T} w \leq (t-\mu_{j})/\sigma_{j} + \eps/\underline{\sigma}, 1 \leq \forall j \leq p \},
\]
we have by Lemma \ref{lem: Nazarov},
\[
\frac{1}{\eps} \bP (W \in A_{t+\eps} \backslash A_{t}) \leq \frac{1}{\eps}\bP(W \in A^{\eps/\underline{\sigma}}_{t} \backslash A_{t}) + o(1) \leq \frac{1}{\underline{\sigma}} (\sqrt{2 \log p} + 2) + o(1), \ \eps \downarrow 0,
\]
which leads to $f(t) \leq (1/\underline{\sigma}) (\sqrt{2\log p} + 2)$ a.e.
\end{proof}
We are now in position to prove Lemma \ref{lem: AC for nonzero mean}.

\begin{proof}[Proof of Lemma \ref{lem: AC for nonzero mean}]
Pick any $\delta > 0$, and let $\{ t_{1},\dots,t_{N} \}$ be a $\delta$-net of $(T,d)$ with $N=N(T,d,\delta)$. 
Then 
\[
| \sup_{t \in T} X(t) - \max_{1 \leq j \leq N} X(t_{j}) | \leq \sup_{(s,t) \in T_{\delta}} | X(t) - X(s) | =: \zeta,
\]
so that for every $x \in \R, \eps, r' > 0$, 
\[
\bP (| \sup_{t \in T} X(t) -x| \leq \eps) \leq \bP (| \sup_{1 \leq j \leq N} X(t_{j}) -x| \leq \eps + r') + \bP (  \zeta > r' ).
\]
By Lemma \ref{lem: AC nonzero mean fin dim}, the first term on the right-hand side is bounded by 
\[
2(1/\underline{\sigma})(\eps + r') (\sqrt{2 \log N} + 2).
\]
On the other hand, by the Borell-Sudakov-Tsirel'son inequality, for every $r > 0$, 
\[
\bP ( \zeta > \bE [\zeta] + r\delta) \leq e^{-r^{2}/2}.
\]
By taking $r' = \bE [\zeta] + r\delta= \phi(\delta) + r\delta$, we obtain the desired conclusion. 
\end{proof}

\section{Proofs for Section \ref{sec: auxiliary couplings}}
\label{sec: proofs auxiliary couplings}

We begin with proving the following lemma.

\begin{lemma}
\label{thm: smooth approximation}
Let $\delta>0$. For every Borel subset $A$ of $\R$, there exists a smooth function $g:\R \to \R$ such that $\| g' \|_{\infty}\leq\delta^{-1}, \| g'' \|_{\infty}\leq K\delta^{-2}, \| g'''\|_{\infty} \leq K\delta^{-3}$, where $K$ is an absolute constant, and $1_A(t)\leq g(t)\leq 1_{A^{3\delta}}(t)$ for all $t\in \R$.
\end{lemma}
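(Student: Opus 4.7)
The plan is to construct $g$ by mollification: convolve the indicator of a small enlargement of $A$ with a smooth, compactly supported bump. Fix once and for all a nonnegative $\phi\in C^{\infty}(\R)$ with support in $[-3/2,3/2]$, $\int_{\R}\phi=1$, and $\int_{\R}|\phi'|\leq 1$. Such a $\phi$ is obtained by taking a symmetric bump with a flat plateau of value strictly below $1/2$ and smooth monotone $S$-curve transitions down to zero, adjusting the plateau value and the widths of the plateau and transitions so that both $\int\phi=1$ and the total-variation identity $\int|\phi'|=2\|\phi\|_{\infty}\leq 1$ hold simultaneously; such a $\phi$ automatically has $\int|\phi''|$ and $\int|\phi'''|$ bounded by an absolute constant. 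Set $\phi_{\delta}(x):=\delta^{-1}\phi(x/\delta)$, which is supported in $[-3\delta/2,3\delta/2]$ with $\int\phi_{\delta}=1$ and $\int|\phi_{\delta}^{(k)}|=\delta^{-k}\int|\phi^{(k)}|$ for $k=0,1,2,3$. Then define
\[
g(t):=(1_{A^{3\delta/2}}\ast\phi_{\delta})(t)=\int_{\R}1_{A^{3\delta/2}}(t-s)\,\phi_{\delta}(s)\,ds,
\]
which lies in $C^{\infty}(\R)$ by standard properties of convolution with a smooth, compactly supported kernel.

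The pointwise sandwich follows directly from the support of $\phi_{\delta}$. For $t\in A$, every $s\in[-3\delta/2,3\delta/2]$ satisfies $d(t-s,A)\leq|s|\leq 3\delta/2$, so $t-s\in A^{3\delta/2}$, and hence $g(t)=\int\phi_{\delta}=1$. For $t\notin A^{3\delta}$, i.e.\ $d(t,A)>3\delta$, any such $s$ gives $d(t-s,A)\geq d(t,A)-|s|>3\delta-3\delta/2=3\delta/2$, so $t-s\notin A^{3\delta/2}$ and $g(t)=0$. Together with $0\leq g\leq 1$, this yields $1_{A}(t)\leq g(t)\leq 1_{A^{3\delta}}(t)$ for every $t\in\R$. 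For the derivative bounds, differentiation under the integral sign places all the derivatives on $\phi_{\delta}$, so $|g^{(k)}(t)|\leq\int|\phi_{\delta}^{(k)}|=\delta^{-k}\int|\phi^{(k)}|$ for $k=1,2,3$; by the normalization of $\phi$ this yields $\|g'\|_{\infty}\leq\delta^{-1}$ together with $\|g''\|_{\infty}\leq K\delta^{-2}$ and $\|g'''\|_{\infty}\leq K\delta^{-3}$, where $K:=\max\{\int|\phi''|,\int|\phi'''|\}$ is an absolute constant.

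The only real subtlety is securing a $\phi$ with $\int|\phi'|\leq 1$: for any smooth nonnegative bump with $\int\phi=1$ supported in an interval of length $L$, the identity $\int|\phi'|=2\|\phi\|_{\infty}$ combined with $\|\phi\|_{\infty}\geq 1/L$ forces $\int|\phi'|\geq 2/L$. One therefore must work on an interval of length strictly greater than $2$, which is precisely why the enlargement factor $3\delta$ (rather than something smaller) appears in the statement of the lemma. Given this flexibility in the width, the rest of the argument is routine.
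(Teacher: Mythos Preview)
Your argument is correct, and it is a mild variant of the paper's approach. Both proofs build $g$ by mollification, but they mollify different objects. The paper first replaces $1_{A}$ by the Lipschitz ``tent'' function $h(t)=(1-\rho(t,A^{\delta})/\delta)_{+}$ and then convolves $h$ with the standard bump $\varphi$ supported on $[-1,1]$; the bound $\|g'\|_{\infty}\le\delta^{-1}$ then comes for free from the Lipschitz constant of $h$, with no constraint on the mollifier beyond $\int\varphi=1$. You instead mollify the indicator $1_{A^{3\delta/2}}$ directly, which forces all derivatives onto the kernel and therefore requires the extra normalization $\int|\phi'|\le 1$; as you correctly observe, this in turn forces the kernel to live on an interval of length strictly greater than $2$, which is exactly accommodated by the $3\delta$ enlargement. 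The paper's route is slightly more economical in that any off-the-shelf mollifier works, whereas yours needs a bespoke $\phi$; on the other hand, your version avoids the intermediate Lipschitz layer and makes the provenance of the $3\delta$ constant more transparent.
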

\begin{proof}
The proof is essentially similar to that of Lemma 18 in Chapter 10 of \cite{P02} with the exception that we employ a compactly supported smoother. Let $\rho$ denote the Euclidean distance on $\R$, and consider the function $h(t)=(1-\rho(t,A^\delta)/\delta)_{+}$. Observe that $h$ is a bounded Lipschitz function with Lipschitz constant $\delta$. Let $\varphi:\R\to\R$ be the function defined by $\varphi(t)=C\exp(1/(t^2-1))$ for $|t|\leq 1$ and $\varphi(t)=0$ for $|t|>1$, where the constant $C$ is chosen in such a way that $\int_{\R}\varphi(t)dt=1$. Note that $\varphi$ is infinitely differentiable with support $[-1,1]$. Define $g:\R\to\R$ by
\[
g(t)=\int_{\R}h(t+\delta z)\varphi(z)dz = \delta^{-1}\int_{\R}h(y)\varphi(\delta^{-1}(y-t))dy. 
\]
Then it is routine to verify that $g$ is infinitely differentiable and $\| g' \|_{\infty} \leq \delta^{-1}, \| g'' \|_{\infty} \leq K \delta^{-2}, \| g''' \|_{\infty} \leq K\delta^{-3}$.
In addition, for $t\in A$, $h(t+\delta z)=1$ if $|z|\leq 1$, and $\varphi(z)=0$ if $|z|>1$. Hence $1_A(t) \leq g(t)$. Meanwhile, for  $t \notin A^{3\delta}$, $h(t+\delta z)=0$ if  $|z| \leq 1$, and $\varphi(z)=0$ if $|z|>1$. Hence $g(t)\leq 1_{A^{3\delta}}(t)$. 
\end{proof}

\begin{proof}[Proof of Theorem \ref{thm: fin dim main}]
Here we write $a \lesssim b$ if there exists a universal constant $C > 0$ such that $a \leq Cb$. 
Fix $\delta>0$, and let $\beta=\delta^{-1}\log p$. Since $p\geq 2$, we have $1/\delta\lesssim \beta$. Let $A$ be a Borel subset of $\R$. Letting $e_\beta=\beta^{-1}\log p(=\delta)$ and using Lemma \ref{thm: smooth approximation}, we can construct a smooth function $g:\R\to \R$ such that $\|g'\|_\infty\leq \delta^{-1}$, $\|g''\|_\infty\leq K\delta^{-2}$, $\|g'''\|_\infty\leq K\delta^{-3}$ for some absolute constant $K > 0$, and $1_{A^{e_\beta}}(t)\leq g(t)\leq 1_{A^{e_\beta+3\delta}}(t)$ for all $t \in \R$. 
In addition, let $\bar{\mu}=\sum_{i=1}^n \mu_i$ and consider the function $F_\beta:\R^p\to \R$ defined by $F_\beta(x)=\beta^{-1}\log(\sum_{j=1}^p e^{\beta(x_j+\bar{\mu}_j)}), \ x\in\R^p$. Then it is seen that 
$\max_{1\leq j\leq p}x_j\leq F_\beta(x-\bar{\mu})\leq \max_{1\leq j\leq p}x_j+e_\beta$ for all $x \in \R^{p}$. 
Hence
\begin{equation}
\label{eq: starting inequality}
\bP(Z\in A)\leq \bP\left(F_\beta\left(n^{-1/2}\textstyle{\sum_{i=1}^n} \tilde{X}_i\right)\in A^{e_\beta}\right).
\end{equation}

Next, let $m = g \circ F_{\beta}$. Then, as in the proof of Lemma 5.1 in \cite{CCK3} (see also \cite{C05a, C05b}), there exist functions $U_{j k l}:\R^p\to \R, \ 1\leq j,k,l\leq p$ such that
\begin{align*}
&|\partial_j\partial_k\partial_l m(x)|\leq U_{j k l}(x), \ \forall x \in \R^{p}, \\
&\textstyle{\sum_{j,k,l=1}^p} U_{j k l}(x)\lesssim (\delta^{-3}+\beta\delta^{-2}+\beta^2\delta^{-1})\lesssim \beta^2\delta^{-1}, \ \forall x \in \R^{p},  \\
&U_{j k l}(x)\lesssim U_{j k l}(x+y)\lesssim U_{j k l}(x), \ \forall x,y \in \R^{p} \ \text{with} \ \max_{1 \leq j \leq p} |y_{j}| \leq \beta^{-1}. 
\end{align*}
Hence proceeding as in Step 1 of the proof of Lemma 5.1 in \cite{CCK3} and observing that the term $\int_0^1 \omega(t)\bE[h(Z^{(n)},6)]dt$ in that paper is trivially bounded  by a universal constant, one can show that for some universal constant $c> 0$,
\begin{align*}
&\left | \bE \left [ m\left (n^{-1/2} {\textstyle \sum}_{i=1}^n \tilde{X}_i \right )\right]-\bE \left [ m\left (n^{-1/2} {\textstyle \sum}_{i=1}^n \tilde{Y}_i \right )\right] \right |\\
&\quad \lesssim \frac{\log^2 p}{\delta^3 \sqrt{n}}\cdot \{ L_n+M_{n,X}(c\delta)+M_{n,Y}(c\delta) \} =: I, 
\end{align*}
which implies that for some universal constant $C$, 
\begin{align*}
&\bP\left(F_\beta\left(n^{-1/2}\textstyle{\sum_{i=1}^n} \tilde{X}_i\right)\in A^{e_\beta}\right) \leq \bE\left[m\left(n^{-1/2}\textstyle{\sum_{i=1}^n}\tilde{X}_i\right)\right] \\
&\leq \bE\left[m\left(n^{-1/2}\textstyle{\sum_{i=1}^n}\tilde{Y}_i\right)\right] + C I \leq \bP\left\{ F_\beta\left(n^{-1/2}\textstyle{\sum_{i=1}^n} \tilde{Y}_i\right)\in A^{e_\beta+3\delta}\right \} + C I \\
&\leq \bP\left(\tilde{Z}\in A^{2e_\beta+3\delta}\right)+C I.
\end{align*}
Combining this inequality with (\ref{eq: starting inequality}) leads to the conclusion of the theorem. 
\end{proof}

\begin{proof}[Proof of Theorem \ref{thm: mb finite dimensional}]
Since $p\geq 2$, the assertion is trivial if $\Delta/\delta^2>1$. Therefore, throughout the proof, we will assume that $\Delta/\delta^2\leq 1$. Let $\beta>0$, and define $F_\beta:\R^p\to \R$ by $F_\beta(x)=\beta^{-1}\log(\sum_{j=1}^p e^{\beta(x_j+\mu_j)})$ where $x=(x_1,\dots,x_p)^{T}$ and $\mu=(\mu_1,\dots,\mu_p)^{T}$. As in the proof of Theorem \ref{thm: fin dim main}, it can be shown that for every $g\in C^2(\R)$, the function $m = g \circ F_{\beta}$  satisfies the inequality
\[
\sum_{j,k=1}^p |\partial_j\partial_k m(x)|\leq \|g''\|_\infty+2\|g'\|_\infty \beta
\]
for all $x\in \R^p$.
Hence using the same arguments as those used in the proof of Theorem 1 and Comment 1 in \cite{CCK2} with  $X$ and $Y$ replaced by  $X-\mu$ and $Y-\mu$, respectively, we have
\[
\left|\bE\left[g\left(\max_{1\leq j\leq p}X_j\right)\right]-\bE\left[g\left(\max_{1\leq j\leq p}Y_j\right)\right]\right|\leq \|g''\|_\infty \Delta/2+2\|g'\|_\infty \sqrt{2\Delta\log p}.
\]

Now, take any Borel subset $A$ of $\R$. By Lemma \ref{thm: smooth approximation}, we can construct a function $g\in C^2(\R)$ such that $\|g'\|_\infty\leq \delta^{-1}$ and $\|g''\|_\infty\leq K\delta^{-2}$ for some absolute constant $K$, and $1_A(t)\leq g(t)\leq 1_{A^{3\delta}}(t)$ for all $t\in\R$. 
For this  $g$ and some absolute constant $C$, we have
\begin{align*}
&\bP\Big(\max_{1\leq j\leq p}X_j\in A\Big)\leq \bE\Big[g\Big(\max_{1\leq j\leq p}X_j\Big)\Big] \\
&\quad \leq \bE\Big[g\Big(\max_{1\leq j\leq p}Y_j\Big)\Big]+C(\Delta\delta^{-2}+\delta^{-1}\sqrt{\Delta\log p})\\
&\quad \leq \bP\Big(\max_{1\leq j\leq p}Y_j\in A^{3\delta}\Big)+C(\Delta\delta^{-2}+\delta^{-1}\sqrt{\Delta\log p})\\
&\quad \leq \bP\Big(\max_{1\leq j\leq p}Y_j\in A^{3\delta}\Big)+C\sqrt{(\Delta/\delta^2)\log p}
\end{align*}
where the last line follows from the fact that $\Delta/\delta^2\leq 1$ and $p\geq 2$.
The conclusion of the theorem follows from replacing $\delta$ by $\delta/3$.
\end{proof}


\section{Some technical tools}
\label{sec: technical tools}

\begin{lemma}
\label{concentration}
Let $X_1,\dots,X_n$ be i.i.d. random variables taking values in a measurable space $(S,\mathcal{S})$ with common distribution $P$. Let $\mF$ be a pointwise measurable class of functions $f:S\to \R$, to which a measurable envelope $F$ is attached. Consider the empirical process $\bG_n f=n^{-1/2}\sum_{i=1}^n (f(X_i)-Pf)$, $f\in\mF$. Let $\sigma^2>0$ be a constant such that $\sup_{f\in\mF}P f^2\leq \sigma^2\leq \|F\|_{P,2}^2$. Let $M=\max_{1\leq i\leq n} F(X_i)$. Suppose that $F \in \mL^{q} ( P )$ for some $q \geq 2$. Then for every $t \geq 1$, with probability $> 1-t^{-q/2}$,
\begin{multline*}
\| \bG_{n} \|_{\mF} \leq (1+\alpha) \bE [ \| \bG_{n} \|_{\mF} ] +K_{q} \Big \{ (\sigma + n^{-1/2} \| M \|_{q}) \sqrt{t} \\
+  \alpha^{-1}  n^{-1/2} \| M \|_{2}t \Big \}, \ \forall \alpha > 0,
\end{multline*}
where $K_{q}> 0$ is a constant that depends only $q$.
\end{lemma}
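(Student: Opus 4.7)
The plan is to combine Bousquet's form of Talagrand's inequality for bounded empirical processes with a truncation argument calibrated to the $L^q$-envelope, along the lines of Adamczak's \cite{CCK1}-style extension. The basic idea is that after truncating at a level chosen so that the event $\{M>\tau\}$ has polynomial probability $\lesssim t^{-q/2}$, the bounded-envelope Bousquet inequality takes over and produces a sub-exponential deviation whose quadratic and linear coefficients are $\sigma^2$ and a suitable multiple of $\|M\|_2$ (respectively), while the residue from the truncation contributes the $n^{-1/2}\|M\|_q\sqrt t$ correction.

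I would first decompose each $f\in\mF$ as $f = f\cdot 1_{\{F\le \tau\}} + f\cdot 1_{\{F>\tau\}}$ and write $\mF^{\tau} = \{f\cdot 1_{\{F\le\tau\}}:f\in\mF\}$. The class $\mF^\tau$ has constant envelope $\tau$, and its $L^2(P)$ radius is still bounded by $\sigma$, so Bousquet's inequality gives: for every $s\ge 0$, with probability at least $1-e^{-s}$,
\[
\|\bG_n\|_{\mF^\tau} \le (1+\alpha)\,\bE[\|\bG_n\|_{\mF^\tau}] + \sigma\sqrt{2s} + C\alpha^{-1}\tau s/\sqrt{n}.
\]
A standard symmetrization/contraction argument replaces $\bE[\|\bG_n\|_{\mF^\tau}]$ by $\bE[\|\bG_n\|_\mF]$ at the cost of an additive term $\sqrt{n}\,P[F\,1_{\{F>\tau\}}]\le \sqrt{n}\,\|F\|_{P,q}^q/\tau^{q-1}$, which is absorbed in the final estimate.

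Next, I would bound the complementary part by
\[
\sup_{f\in\mF}|\bG_n(f\cdot 1_{\{F>\tau\}})|\le \frac{1}{\sqrt n}\sum_{i=1}^n F(X_i)\,1_{\{F(X_i)>\tau\}} + \sqrt n\,P[F\,1_{\{F>\tau\}}].
\]
Using $F\cdot 1_{\{F>\tau\}}\le F^q/\tau^{q-1}$ together with a Markov inequality applied to $\sum_i F(X_i)^q$ (controlled via $\|M\|_q$) gives a $\lesssim n^{-1/2}\|M\|_q\sqrt t$ bound on a $1-\tfrac12 t^{-q/2}$ event, while the second, deterministic summand is dominated by the same quantity for the calibrated $\tau$.

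The final step is calibration: set $s=t$ and choose $\tau$ essentially as the larger of $c\|M\|_2 t^{1/2}$ and $c\|M\|_q t^{1/q}$, ensuring $\bP(M>\tau)\le \tfrac12 t^{-q/2}$ by Markov applied to $M^q$. Intersecting the Bousquet event with $\{M\le\tau\}$ and the tail-sum event yields the full bound on an event of probability $>1-t^{-q/2}$.

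The main technical obstacle is explaining why the linear-in-$t$ term ends up with the coefficient $\|M\|_2$ rather than $\tau$ or $\|M\|_q$. This refinement comes from noting that on $\{M\le\tau\}$ the effective sup-norm in Bousquet's inequality is the random quantity $M$, not the deterministic $\tau$; controlling $M\cdot t/\sqrt n$ by $\|M\|_2\sqrt{t}\cdot \sqrt{t}/\sqrt n$ via a second-moment Markov inequality (losing only $t^{-1}$ in probability, which is dominated by $t^{-q/2}$ since $q\ge 2$) produces precisely the $\alpha^{-1}n^{-1/2}\|M\|_2 t$ term. Once this two-stage truncation (at level $\tau$ for the polynomial tail, then at level $\|M\|_2\sqrt t$ inside the Bousquet constant) is set up cleanly, collecting all contributions and relabelling constants gives the stated inequality with the dependence on $q$ absorbed into $K_q$.
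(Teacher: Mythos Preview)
The paper does not give a self-contained proof: it simply cites Theorem 12 of Boucheron--Bousquet--Lugosi--Massart \cite{BBLM05} for the underlying moment inequality and refers to Theorem 5.1 in \cite{CCK1} for the present formulation. That route bounds the $q$-th moment of $(\|\bG_n\|_\mF-\bE\|\bG_n\|_\mF)_+$ directly via the entropy method and then applies Markov's inequality; the particular placement of $\|M\|_q$ in the $\sqrt t$ term and of $\|M\|_2$ in the $t$ term is inherited from the structure of that moment bound.

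Your truncation + Bousquet scheme is a genuinely different route, and its broad outline (split at level $\tau$, apply the bounded-envelope inequality to $\mF^\tau$, control the tail piece by Markov on $M^q$) is the standard Adamczak-type strategy. However, the step that is supposed to deliver the coefficient $\|M\|_2$ in front of the linear-in-$t$ term has a gap. You write that a second-moment Markov bound on $M$ costs ``only $t^{-1}$ in probability, which is dominated by $t^{-q/2}$ since $q\ge 2$.'' The inequality goes the other way: for $t\ge 1$ and $q>2$ one has $t^{-1}>t^{-q/2}$, so adding a $t^{-1}$ failure event destroys the promised $1-t^{-q/2}$ guarantee (the case $q=2$ is borderline). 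More structurally, Bousquet applied to the truncated class puts the deterministic level $\tau$ in the linear term, and making $\bP(M>\tau)\lesssim t^{-q/2}$ forces $\tau\gtrsim \|M\|_q\,t^{1/2}$; plugging this back gives a linear contribution of order $n^{-1/2}\|M\|_q\,t^{1/2}\cdot s$, not $n^{-1/2}\|M\|_2\,t$. The ``two-stage'' fix you sketch does not repair this, because you cannot replace the deterministic envelope in Bousquet's inequality by the random $M$ without reproving the inequality, and the Markov step you invoke to pass from $M$ to $\|M\|_2\sqrt t$ costs exactly the wrong power of $t$. Getting $\|M\|_2$ with the correct $t$-dependence in the linear term is precisely what the BBLM05 moment inequality provides, and it does not fall out of a plain truncation + exponential-inequality argument.
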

\begin{proof}
The lemma is essentially due to \cite{BBLM05}, Theorem 12. See Theorem 5.1 in \cite{CCK1} for the version stated here.
\end{proof}

\begin{lemma}
\label{cor: maximal}
Consider the setting of Lemma \ref{concentration}. In addition, suppose that there exist constants $A \geq e$ and $v \geq 1$ such that $\sup_{Q} N(\mF,e_{Q},\varepsilon \| F \|_{Q,2}) \leq (A/\varepsilon)^{v}, \ 0 < \varepsilon \leq 1$.
Then
\begin{equation*}
\bE [ \| \bG_{n} \|_{\mF} ] \leq K \left \{ \sqrt{v\sigma^{2} \log \left ( \frac{A \| F \|_{P,2}}{\sigma} \right ) } + \frac{v\| M \|_{2}}{\sqrt{n}} \log \left ( \frac{A \| F \|_{P,2}}{\sigma} \right ) \right \},
\end{equation*}
where $K$ is an absolute constant.
\end{lemma}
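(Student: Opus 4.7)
The plan is to derive the bound by combining three classical ingredients: (i) the symmetrization inequality, (ii) Dudley's chaining bound applied conditionally to the Rademacher-symmetrized process against the random pseudo-metric $e_{P_n}$, and (iii) a bootstrapping step that converts the random quantities $\hat\sigma_n^2 := \sup_{f \in \mF} P_n f^2$ and $\|F\|_{P_n,2}$ into their deterministic counterparts $\sigma^2$ and $\|F\|_{P,2}$.

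First, by symmetrization, $\bE\|\bG_n\|_\mF \leq 2 \bE\|\bG_n^\circ\|_\mF$, where $\bG_n^\circ f := n^{-1/2}\sum_{i=1}^{n} \varepsilon_i f(X_i)$ and $\varepsilon_1,\dots,\varepsilon_n$ are i.i.d. Rademacher variables independent of $X_1^{n}$. Conditional on $X_1^{n}$, the process $\bG_n^\circ$ has sub-Gaussian increments in the random pseudo-metric $e_{P_n}$ with $L^2$-diameter at most $2\hat\sigma_n$, so Dudley's entropy integral gives
\[
\bE_{\varepsilon} \|\bG_n^\circ\|_\mF \leq C \int_{0}^{\hat\sigma_n} \sqrt{\log 2 N(\mF,e_{P_n},\tau)} \, d\tau.
\]
The VC hypothesis, applied to $Q = P_n$ with $\varepsilon = \tau/\|F\|_{P_n,2}$, yields $N(\mF,e_{P_n},\tau) \leq (A\|F\|_{P_n,2}/\tau)^v$ for $\tau \leq \|F\|_{P_n,2}$. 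Using the standard estimate $\int_{0}^{\delta}\sqrt{\log(c/\tau)}\, d\tau \leq C\delta \sqrt{\log(c/\delta)}$ (valid for $c \geq \delta$, with $A \geq e$ keeping the log bounded below), one obtains
\[
\bE_{\varepsilon} \|\bG_n^\circ\|_\mF \leq C \sqrt{v} \,\hat\sigma_n\, \sqrt{\log(A\|F\|_{P_n,2}/\hat\sigma_n)}.
\]

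Second, I would take unconditional expectations. Via Cauchy--Schwarz and Jensen's inequality applied to $\log$, the unconditional expectation reduces to a constant times $\sqrt{v \bE[\hat\sigma_n^{2}]}\cdot \sqrt{\log(A\|F\|_{P,2}/\sigma)}$ (after handling the regime where $\hat\sigma_n$ is close to $\|F\|_{P_n,2}$ by peeling). To bound $\bE[\hat\sigma_n^{2}]$, I use the decomposition
\[
\hat\sigma_n^{2} \leq \sigma^2 + \sup_{f \in \mF}|(P_n - P)f^{2}| \leq \sigma^2 + n^{-1/2}\|\bG_n\|_{\mF^2}, \qquad \mF^{2} := \{f^2 : f \in \mF\},
\]
and control $\bE\|\bG_n\|_{\mF^2}$ by a second round of symmetrization combined with the Ledoux--Talagrand contraction inequality (since $x \mapsto x^{2}$ is $2M$-Lipschitz on $[-M,M]$ conditional on $X_1^{n}$), which, together with Cauchy--Schwarz and a Talagrand-type control of the second moment by the mean, gives $\bE\|\bG_n\|_{\mF^2} \leq C \|M\|_{2} \cdot (\bE \|\bG_n\|_\mF \vee \sigma\sqrt{vL})$. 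Substituting these bounds into the chaining estimate produces a self-bounding inequality in $U := \bE \|\bG_n\|_\mF$ of the shape
\[
U \leq C\sqrt{vL}\left( \sigma + \sqrt{\|M\|_{2}\, U/\sqrt{n}}\, \right), \qquad L := \log(A\|F\|_{P,2}/\sigma),
\]
which on completing the square in $\sqrt{U}$ inverts to $U \leq C'\,\sigma \sqrt{vL} + C'\, vL \|M\|_{2}/\sqrt{n}$, matching the asserted bound.

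The main obstacle is the self-bounding inversion: one must keep careful track of constants through both symmetrization-chaining rounds, justify the Jensen step that replaces $\hat\sigma_n$ and $\|F\|_{P_n,2}$ by $\sigma$ and $\|F\|_{P,2}$ inside the logarithm (for instance by peeling $\hat\sigma_n$ into a regime where $x\mapsto x\sqrt{\log(c/x)}$ is monotone and absorbing the boundary case into the envelope term), and verify that solving the resulting quadratic in $\sqrt{U}$ indeed yields a residual linear in $\|M\|_2$ with $v$ and $L$ each appearing to the first power, rather than a weaker square-root-type term.
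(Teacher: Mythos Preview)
The paper does not actually prove this lemma: its entire proof is the citation ``See Corollary 5.1 in \cite{CCK1}.'' Your sketch---symmetrize, apply Dudley's entropy bound conditionally on $X_1^n$ using the VC hypothesis with $Q=P_n$, then close the loop by bounding $\bE[\hat\sigma_n^2]$ via the contraction principle and solving the resulting quadratic self-bounding inequality in $U=\bE\|\bG_n\|_\mF$---is precisely the classical argument that underlies the cited result (it goes back to Gin\'e--Koltchinskii and is reproduced in \cite{CCK1}). So in substance your approach and the paper's (deferred) approach coincide.

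The obstacles you flag are the real ones. Two small cautions. First, the step ``via Cauchy--Schwarz and Jensen applied to $\log$'' needs to be done as you later indicate, by a case split on $\hat\sigma_n$ versus $\sigma$ (or equivalently by monotonicity of $x\mapsto x\sqrt{\log(c/x)}$ on the relevant range) rather than a single Jensen application, since both $\hat\sigma_n$ and $\|F\|_{P_n,2}$ are random; the clean way is to note $\bE\|F\|_{P_n,2}^2=\|F\|_{P,2}^2$ and use concavity of $\sqrt{\log(\cdot)}$ on the envelope part, while the $\hat\sigma_n$ part is absorbed into the self-bounding loop. Second, after the conditional contraction bound $\bE_\varepsilon\|\bG_n^\circ\|_{\mF^2}\le 2M\,\bE_\varepsilon\|\bG_n^\circ\|_\mF$, passing to the unconditional bound $C\|M\|_2\cdot\bE\|\bG_n\|_\mF$ requires controlling the $L^2$ norm of $\|\bG_n^\circ\|_\mF$ by its $L^1$ norm; this is where the ``Talagrand-type'' input you mention is genuinely needed (Hoffmann--J{\o}rgensen or the Gaussian/Rademacher concentration for suprema suffices). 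With those two points filled in, the quadratic in $\sqrt{U}$ indeed yields the stated bound with $v$ and $L=\log(A\|F\|_{P,2}/\sigma)$ appearing to the first power in the envelope term.
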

\begin{proof}
See Corollary 5.1 in \cite{CCK1}.
\end{proof}

\begin{lemma}[Talagrand's inequality]
\label{lem: talagrand inequality}
Consider the setting of Lemma \ref{cor: maximal}, but suppose now that the envelope $F$ is bounded by a constant $b > 0$, and let $\sigma^2 > 0$ be a constant such that $\sup_{f\in \mF}Pf^2\leq \sigma^2\leq b^2$. If $b^2v\log(Ab/\sigma)\leq n\sigma^2$, then for every $0 < t \leq n\sigma^2/b^2$,
\[
\bP\left\{ \|\bG_n\|_{\mF}>K\sigma \sqrt{t\vee(v\log(Ab/\sigma)}\right \} \leq e^{-t},
\]
where $K$ is an absolute constant.
\end{lemma}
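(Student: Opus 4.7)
The plan is to combine a Talagrand-type concentration inequality (applied to $\|\bG_n\|_{\mF}$ around its mean) with the expectation bound in Lemma \ref{cor: maximal}, and then use the two hypotheses ($b^2 v \log(Ab/\sigma) \leq n\sigma^2$ and $t \leq n\sigma^2/b^2$) to absorb the linear-in-$t$ and linear-in-$v\log(Ab/\sigma)$ remainder terms into the square-root terms, giving the stated bound.

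First, I would invoke a standard Bousquet-style concentration inequality for the supremum of the empirical process with bounded envelope. Since $F \leq b$ and $\sup_{f \in \mF} P f^{2} \leq \sigma^{2}$, there exists an absolute constant $C_1 > 0$ such that for every $t > 0$,
\[
\bP \bigl \{ \|\bG_n\|_{\mF} > \bE[\|\bG_n\|_{\mF}] + C_1 \bigl ( \sigma \sqrt{t} + b t/\sqrt{n} \bigr ) \bigr \} \leq e^{-t}.
\]
(This can be derived as a special case of Lemma \ref{concentration} with $\alpha = 1$ and $q$ taken arbitrarily large, using $\|M\|_q \leq b$, or by directly citing the bounded-envelope form of Talagrand's inequality.)

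Second, I would apply Lemma \ref{cor: maximal} with $\|F\|_{P,2} \leq b$ and $\|M\|_2 \leq b$ to obtain
\[
\bE[\|\bG_n\|_{\mF}] \leq C_2 \bigl \{ \sigma \sqrt{v \log(Ab/\sigma)} + b v \log(Ab/\sigma)/\sqrt{n} \bigr \}.
\]

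Third, I would simplify each ``linear'' term using the two hypotheses. The condition $b^2 v \log(Ab/\sigma) \leq n \sigma^2$, upon multiplying through by $v\log(Ab/\sigma)/n$ and taking the square root, yields
\[
b v \log(Ab/\sigma)/\sqrt{n} \leq \sigma \sqrt{v \log(Ab/\sigma)},
\]
so the expectation bound collapses to $\bE[\|\bG_n\|_{\mF}] \leq 2 C_2 \sigma \sqrt{v \log(Ab/\sigma)}$. Similarly, the condition $t \leq n \sigma^{2}/b^{2}$ gives $bt/\sqrt{n} \leq \sigma \sqrt{t}$, so the concentration deviation collapses to $2 C_1 \sigma \sqrt{t}$. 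Combining,
\[
\|\bG_n\|_{\mF} \leq 2 C_2 \sigma \sqrt{v \log(Ab/\sigma)} + 2 C_1 \sigma \sqrt{t}
\]
with probability at least $1 - e^{-t}$, and using $\sqrt{a} + \sqrt{b} \leq 2 \sqrt{a \vee b}$ this is at most $K \sigma \sqrt{t \vee v \log(Ab/\sigma)}$ for an absolute constant $K$.

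There is no real obstacle here beyond bookkeeping of constants: the proof is essentially a packaging of (i) the bounded-envelope Talagrand inequality and (ii) the entropy-based expectation bound of Lemma \ref{cor: maximal}, with the ``Bernstein regime'' conditions in the hypothesis designed precisely so that the sub-Gaussian $\sigma\sqrt{t}$ term dominates the sub-exponential $bt/\sqrt{n}$ term and the $\sigma\sqrt{v\log(Ab/\sigma)}$ term dominates its $b v\log(Ab/\sigma)/\sqrt{n}$ counterpart. The one place to be careful is to ensure that the version of Talagrand's inequality being cited holds uniformly in $t$ in the relevant range and produces an absolute (rather than $q$-dependent) constant, which is why I would appeal to Bousquet's or Massart's bounded-envelope form rather than route through Lemma \ref{concentration} literally.
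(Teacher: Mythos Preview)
Your argument is correct. The paper's own ``proof'' of this lemma is simply a citation (to Theorem B.1 in \cite{CCK4}, with original references \cite{T96,M00,GG01}), so there is nothing to compare at the level of argument; what you have written is in effect a reconstruction of the derivation that underlies that cited result. The route you take---Bousquet/Massart concentration around the mean, Lemma~\ref{cor: maximal} for the mean, and then the two hypotheses to collapse the Bernstein-type $bt/\sqrt{n}$ and $bv\log(Ab/\sigma)/\sqrt{n}$ terms into their sub-Gaussian counterparts---is exactly the standard one, and your remark about preferring the bounded-envelope form of Talagrand's inequality (to get an absolute constant rather than a $q$-dependent one from Lemma~\ref{concentration}) is well taken.
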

\begin{proof}
This form of Talagrand's inequality is taken from Theorem B.1 in \cite{CCK4};  the original references go back to \cite{T96}, \cite{M00}, and \cite{GG01}.
\end{proof}



\begin{lemma}
\label{lem: maximal ineq nonnegative}
Let $X_1,\dots,X_n$ be independent random vectors in $\R^p$ with $p\geq 2$ such that $X_{ij} \geq 0$ for all $i=1,\dots,n$ and $j=1,\dots,p$. Define $Z := \max_{1 \leq j \leq p} \sum_{i=1}^{n} X_{ij}$ and $M := \max_{1 \leq i \leq n} \max_{1 \leq j \leq p}  X_{ij} $. Then
\[
\Ep [Z] \leq K \left( \max_{1\leq j\leq p}\Ep [ {\textstyle \sum}_{i=1}^n X_{ij} ]+\Ep [M] \log p \right),
\]
where $K$ is an absolute constant. 
\end{lemma}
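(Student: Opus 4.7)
The plan is to reduce the control of $\bE[Z]$ to a self-bounding inequality via a symmetrization argument, and then solve a quadratic in $\sqrt{\bE[Z]}$.

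First, I would write
\[
Z \leq \max_{1 \leq j \leq p} \bE \Big[\sum_{i=1}^{n} X_{ij}\Big] + \max_{1 \leq j \leq p} \Big| \sum_{i=1}^{n} (X_{ij} - \bE [X_{ij}]) \Big|,
\]
so that taking expectations and applying the classical symmetrization inequality (valid since the $X_i$'s are independent) gives
\[
\bE[Z] \leq \max_{1\leq j\leq p} \bE\Big[\sum_{i=1}^{n} X_{ij}\Big] + 2\,\bE \Big[\max_{1 \leq j \leq p} \Big| \sum_{i=1}^{n} \eps_i X_{ij} \Big| \Big],
\]
where $\eps_1,\dots,\eps_n$ are i.i.d. Rademacher random variables independent of $X_1^n$.

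Next, I would condition on $X_1^n$ and use Hoeffding's inequality together with a union bound to obtain
\[
\bE \Big[\max_{1 \leq j \leq p} \Big| \sum_{i=1}^{n} \eps_i X_{ij} \Big| \,\Big|\, X_1^n \Big] \leq C \sqrt{\log p}\; \max_{1 \leq j \leq p} \Big( \sum_{i=1}^{n} X_{ij}^{2} \Big)^{1/2},
\]
for some universal constant $C > 0$. The nonnegativity of the $X_{ij}$ enters crucially in the key step: using $X_{ij} \leq M$,
\[
\max_{1 \leq j \leq p} \sum_{i=1}^{n} X_{ij}^{2} \leq M \cdot \max_{1 \leq j \leq p} \sum_{i=1}^{n} X_{ij} = M\,Z.
\]
Combining the last two displays with Cauchy--Schwarz yields
\[
\bE \Big[\max_{1 \leq j \leq p} \Big| \sum_{i=1}^{n} \eps_i X_{ij} \Big| \Big] \leq C \sqrt{\log p}\;\bE [\sqrt{MZ}] \leq C \sqrt{\log p}\;\sqrt{\bE[M]\,\bE[Z]}.
\]

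Setting $a = \bE[Z]$, $b = \max_{j} \bE[\sum_i X_{ij}]$, and $c = \bE[M]$, the preceding estimates combine into the self-bounding inequality
\[
a \leq b + 2C \sqrt{c \log p}\,\sqrt{a}.
\]
Solving this quadratic in $\sqrt{a}$ (using $\sqrt{x+y} \leq \sqrt{x}+\sqrt{y}$ and $(u+v)^{2} \leq 2u^{2}+2v^{2}$) gives $a \leq K(b + c \log p)$ for an absolute constant $K$, which is the stated bound.

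The main obstacle I anticipate is not any single calculation but the bookkeeping in the self-bounding step: it is crucial that the right-hand side of the symmetrization bound is controlled by $\sqrt{M \cdot Z}$ (using nonnegativity) rather than by, say, $\sqrt{\sum_i X_{ij}^2}$ with a worse dependence on $n$, because this is exactly what allows the quadratic trick to promote $\sqrt{\log p}$ into $\log p$ without generating extra factors of $\sqrt n$ or $b$.
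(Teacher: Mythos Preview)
Your argument is correct. The paper itself does not give a proof of this lemma; it simply cites Lemma~9 of \cite{CCK2}, and the proof there follows exactly the scheme you outline: center and symmetrize, apply the sub-Gaussian maximal inequality conditionally on $X_1^n$, use the nonnegativity-based bound $X_{ij}^2 \leq M X_{ij}$ to get $\max_j \sum_i X_{ij}^2 \leq MZ$, then close via Cauchy--Schwarz and the quadratic self-bounding inequality.

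One small remark on presentation: in Step~2 you silently pass from $\eps_i(X_{ij}-\bE[X_{ij}])$ to $\eps_i X_{ij}$; this is of course legitimate (symmetrization against an independent copy, then triangle inequality), but it is worth stating explicitly since the later use of $X_{ij}^2 \leq M X_{ij}$ needs the uncentered $X_{ij}$.
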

\begin{proof}
See Lemma 9 in \cite{CCK2}.
\end{proof}

\begin{lemma}
\label{lem: deviation ineq nonnegative}
Assume the setting of Lemma \ref{lem: maximal ineq nonnegative}. Then
for every $\eta > 0, s \geq 1$ and $t > 0$, 
\[
\Pr ( Z  \geq (1+\eta) \Ep[Z] + t ) \leq K  \Ep[M^{s}]/t^{s},
\]
where $K = K(\eta,s)$ is a constant that depends only on $\eta,s$. 
\end{lemma}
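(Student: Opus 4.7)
My plan is to combine a truncation argument with Talagrand's one-sided concentration inequality (in Bousquet's form) for the maximum of sums of independent bounded non-negative random variables. First I would fix a truncation level $u>0$, to be optimized later in terms of $t$, $\eta$, and $s$, and set $\bar X_{ij}:=X_{ij}\wedge u$ and $\bar Z:=\max_{1\le j\le p}\sum_{i=1}^n \bar X_{ij}$. Since $Z=\bar Z$ on $\{M\le u\}$ and $\bE[\bar Z]\le \bE[Z]$ in any case, Markov's inequality applied to $M$ gives
\[
\Pr(Z\ge (1+\eta)\bE[Z]+t) \le \Pr\bigl(\bar Z \ge (1+\eta)\bE[\bar Z]+t\bigr) + \bE[M^s]/u^s.
\]
The second term is of the desired form as soon as $u$ is chosen proportional to $t$, and the first will be controlled by concentration.

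Next I would view $\bar Z$ as $\sup_{f\in\mF_p}\sum_{i=1}^n f(Y_i)$, with $Y_i=(X_{i1},\dots,X_{ip})$ and $\mF_p=\{y\mapsto y_j:1\le j\le p\}$, envelope $u$. Because the $\bar X_{ij}$ are non-negative and bounded by $u$, one has $\bE[\bar X_{ij}^2]\le u\,\bE[\bar X_{ij}]$, so that the Talagrand variance proxy $V:=\sup_j\sum_i\bE[\bar X_{ij}^2]$ satisfies $V\le u\bE[\bar Z]$. Bousquet's inequality then yields, for $\lambda>0$,
\[
\Pr(\bar Z - \bE[\bar Z] \ge \lambda) \le \exp\!\left(-\frac{\lambda^2}{2(V+2u\bE[\bar Z])+2u\lambda/3}\right),
\]
and with $\lambda=\eta\bE[\bar Z]+t\ge t$ the exponent is readily seen to be of order $\min(1,\eta)\,\lambda/u$ (after splitting on whether the $u\lambda$ or the $u\bE[\bar Z]$ term dominates the denominator, using $\lambda\ge \eta\bE[\bar Z]$).

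Finally, I would pick $u=c(\eta,s)\,t$ for a small constant $c(\eta,s)>0$. The Markov contribution becomes $c(\eta,s)^{-s}\bE[M^s]/t^s$, already of the desired shape. For the Bousquet contribution one exploits the fact that the target bound $K(\eta,s)\bE[M^s]/t^s$ is vacuous unless $t$ is at least a constant multiple of $\bE[M^s]^{1/s}$, so that one only needs to control the concentration term in the non-trivial regime; a short case analysis distinguishing $\lambda\asymp t$ from $\lambda\asymp \eta\bE[\bar Z]$ then makes the exponential Bousquet factor subordinate to $\bE[M^s]/t^s$, and summing the two contributions delivers the claim with $K=K(\eta,s)$.

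The main obstacle is the calibration of $u$. Choosing $u$ too large leaves the Bousquet exponent only a constant in $t$ and hence fails to decay; choosing $u$ too small blows up $\bE[M^s]/u^s$. The extra slack $\eta\bE[\bar Z]$ in $\lambda$ — available because the deviation is measured from $(1+\eta)\bE[Z]$ rather than $\bE[Z]$ — together with the non-negativity of the $X_{ij}$'s, which forces $V\le u\bE[\bar Z]$ rather than a larger variance proxy, are both essential for the trade-off to produce the polynomial rate $\bE[M^s]/t^s$ without an extraneous logarithmic factor.
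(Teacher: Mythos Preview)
The paper itself gives no proof here --- it just cites Lemma~A.5 of \cite{CCK3} --- so there is no in-paper argument to compare to. What matters is whether your outline actually closes.

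It does not. After truncation at level $u$ and the sub-Poisson concentration (whether via Bousquet or, more properly for the uncentered non-negative sum, via the self-bounding inequality for $\bar Z/u$), you arrive at
\[
\Pr\bigl(Z\ge(1+\eta)\bE[Z]+t\bigr)\ \le\ \exp\bigl(-c_\eta\,t/u\bigr)\ +\ \bE[M^s]/u^s ,
\]
valid for every $u>0$. Your choice $u=c(\eta,s)\,t$ makes the second term $C(\eta,s)\,\bE[M^s]/t^s$, but the first term becomes $\exp(-c_\eta/c)$, a \emph{constant} in $t$. The ``vacuous unless $t\gtrsim\|M\|_s$'' remark does not rescue this: for $t\gg\|M\|_s$ the target $K\bE[M^s]/t^s$ tends to $0$ while $\exp(-c_\eta/c)$ stays fixed, so the exponential term cannot be ``subordinate'' to it. The case split $\lambda\asymp t$ versus $\lambda\asymp\eta\bE[\bar Z]$ changes nothing, because in both regimes the exponent is of order $\lambda/u\ge t/u=1/c$. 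If instead you optimize over $u$, a short computation shows
\[
\inf_{u>0}\Bigl\{\exp(-c_\eta t/u)+\bE[M^s]/u^s\Bigr\}\ \asymp\ \frac{\bE[M^s]}{t^s}\,\Bigl(1+\log^{s}\!\bigl(t/\|M\|_s\bigr)\Bigr),
\]
so the method loses a factor $\log^s(t/\|M\|_s)$ and cannot deliver the clean rate stated in the lemma.

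A minor additional point: Bousquet's inequality in the form you quote requires centered summands, so its direct application to $\bar Z=\max_j\sum_i\bar X_{ij}$ with $\bar X_{ij}\ge0$ is not literally justified; the self-bounding inequality is the right tool here and gives the same tail, but as noted this does not repair the main gap.

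To remove the logarithm one needs a genuinely different ingredient: a Hoffmann--J{\o}rgensen-type moment inequality (as used in the Fuk--Nagaev bounds for suprema of empirical processes, cf.\ Adamczak and the proof of the cited lemma in \cite{CCK3}), which yields directly $\|(Z-(1+\eta)\bE Z)_+\|_s\le C_{\eta,s}\|M\|_s$ and hence the stated tail by Markov. Truncation plus a single exponential concentration step is not sufficient.
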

\begin{proof}
See Lemma A.5 in \cite{CCK3}.
\end{proof}

\begin{lemma}
\label{lem: truncated moment}
Let $\xi$ be a nonnegative random variable such that $\Pr(\xi >x)\leq A e^{-x/B}$ for all $x>0$ and for some constants $A, B>0$. Then for every $t>0$, $\Ep[\xi ^31\{\xi >t\}]\leq 6A(t+B)^3e^{-t/B}$. 
\end{lemma}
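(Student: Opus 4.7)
The natural approach is to write the truncated moment as a tail integral and then plug in the assumed exponential tail bound. Specifically, by Fubini's theorem applied to the identity $\xi^{3} = \int_{0}^{\infty} 3u^{2} \mathbf{1}\{ u < \xi \}\, du$, I would derive
\[
\Ep[\xi^{3} \mathbf{1}\{ \xi > t \}] = t^{3} \Pr(\xi > t) + \int_{t}^{\infty} 3u^{2} \Pr(\xi > u)\, du,
\]
where the first term collects the contribution from $u \in [0,t]$ (on which $\{ u < \xi, \xi > t \} = \{ \xi > t \}$) and the second handles $u \in (t, \infty)$ (on which $\{ u < \xi, \xi > t \} = \{ \xi > u \}$).

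The next step is to insert the tail bound $\Pr(\xi > u) \leq A e^{-u/B}$ and evaluate the resulting Gaussian-type integral. After the substitution $v = u/B$, one has
\[
\int_{t}^{\infty} u^{2} e^{-u/B} du = B^{3} \int_{t/B}^{\infty} v^{2} e^{-v} dv = B e^{-t/B}(t^{2} + 2Bt + 2B^{2}),
\]
using the standard incomplete Gamma identity $\int_{a}^{\infty} v^{2} e^{-v} dv = e^{-a}(a^{2}+2a+2)$ (integration by parts twice). Combining these, I obtain
\[
\Ep[\xi^{3} \mathbf{1}\{ \xi > t\}] \leq A e^{-t/B}\bigl( t^{3} + 3B t^{2} + 6 B^{2} t + 6 B^{3} \bigr).
\]

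Finally, I would compare the polynomial in the last display with $6(t+B)^{3} = 6 t^{3} + 18 B t^{2} + 18 B^{2} t + 6 B^{3}$ term by term: the constant coefficients match, and on the monomials $t^{3}, Bt^{2}, B^{2}t$ the upper bound dominates $(1,3,6)$ coefficientwise by $(6,18,18)$. Hence
\[
t^{3} + 3B t^{2} + 6 B^{2} t + 6 B^{3} \leq 6 (t+B)^{3},
\]
which yields the stated inequality $\Ep[\xi^{3} \mathbf{1}\{ \xi > t\}] \leq 6 A (t+B)^{3} e^{-t/B}$. There is no real obstacle here: the only thing to be careful about is the boundary contribution at $u = t$ in the tail-integral representation, which is easily handled by the Fubini split above, and keeping the constants straight in the final polynomial comparison.
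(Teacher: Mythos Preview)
Your argument is correct: the tail-integral decomposition, the incomplete Gamma evaluation, and the coefficientwise comparison with $6(t+B)^{3}$ are all accurate. The paper does not supply its own proof of this lemma but simply cites Lemma~A.8 in \cite{CCK3}; your self-contained derivation is exactly the standard route and is what one would expect the cited proof to contain.
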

\begin{proof}
See Lemma A.8 in \cite{CCK3}.
\end{proof}


\section*{Acknowledgments} 
We would like thank an anonymous referee for valuable comments that helped improve upon the quality of the paper. K. Kato is supported by the Grant-in-Aid for Scientific Research (C) (15K03392) from the Japan Society for the Promotion of Science.  V. Chernozhukov is supported  by a grant from the National Science Foundation. 

\section*{References}

\end{document}